\documentclass[11pt]{amsart}
\usepackage{fullpage,cite,hyperref}
\usepackage{amssymb,amsmath,amsthm,mathtools,extpfeil}
\usepackage[all,cmtip]{xy}
\usepackage{tikz}
\usepackage{rotating,enumitem}
\newtheorem{thm}[equation]{Theorem}
\newtheorem*{thm*}{Theorem}
\newtheorem{lem}[equation]{Lemma}
\newtheorem{prop}[equation]{Proposition}
\newtheorem*{prop*}{Proposition}
\newtheorem{cor}[equation]{Corollary}

\theoremstyle{definition}
\newtheorem*{defn}{Definition}
\newtheorem{rmk}[equation]{Remark}
\newtheorem{ex}[equation]{Example}

\numberwithin{equation}{section}

\DeclareMathOperator{\rat}{loc}
\DeclareMathOperator{\id}{id}

\DeclareMathOperator{\vol}{vol}

\DeclareMathOperator{\rk}{rk}

\DeclareMathOperator{\Hom}{Hom}

\DeclareMathOperator{\Lip}{Lip}

\newcommand{\ph}{\varphi}

\begin{document}
\title{Integral and rational mapping classes}
\author{Fedor Manin and Shmuel Weinberger}
\maketitle

\begin{abstract}
  Let $X$ and $Y$ be finite complexes.  When $Y$ is a nilpotent space, it has a
  rationalization $Y \to Y_{(0)}$ which is well-understood.  Early on it was
  found that the induced map $[X,Y] \to [X,Y_{(0)}]$ on sets of mapping classes
  is finite-to-one.  The sizes of the preimages need not be bounded; we show,
  however, that as the complexity (in a suitable sense) of a rational mapping
  class increases, these sizes are at most polynomial.  This ``torsion''
  information about $[X,Y]$ is in some sense orthogonal to rational homotopy
  theory but is nevertheless an invariant of the rational homotopy type of $Y$ in
  at least some cases.  The notion of complexity is geometric and we also prove a
  conjecture of Gromov \cite{GrMS} regarding the number of mapping classes that
  have Lipschitz constant at most $L$.
\end{abstract}

\section{Introduction}
One of the great successes of homotopy theory is the complete algebraicization of
rational homotopy theory by Quillen \cite{Qui} and Sullivan \cite{SulLong}.  In
particular, while the main objects of study are best understood as infinite
complexes, the finiteness theorem of Sullivan and Hilton--Mislin--Roitberg,
quoted below, and related results allow the ideas to be applied to the homotopy
theory of based maps between finite complexes $X \to Y$, where $Y$ is a nilpotent
space.

To each nilpotent complex $Y$ of finite type\footnote{A CW complex is of finite
  type if it is homotopy equivalent to one with finitely many cells in every
  dimension.} is associated a (functorially constructed) rationalization
$Y_{(0)}$, characterized by the condition that the map $Y \to Y_{(0)}$ induces an
isomorphism on $\pi_i({-}) \otimes \mathbb{Q}$.  In this context, the finiteness
theorem says the following:
\begin{thm*}[10.2(i) in \cite{SulLong} or II.5.4 in \cite{HMR}]
  If $X$ is a finite complex, then the map between (based or unbased) sets of
  homotopy classes $[X,Y] \to [X,Y_{(0)}]$ is finite-to-one.
\end{thm*}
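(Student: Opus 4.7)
The plan is to induct up a principal refinement of the Postnikov tower of $Y$. Since $Y$ is nilpotent of finite type, it admits such a tower
\[\cdots \to Y_n \to Y_{n-1} \to \cdots \to Y_0 = \ast, \qquad Y \simeq \varprojlim_n Y_n,\]
in which each stage $Y_n \to Y_{n-1}$ is a principal fibration with fiber $K(A_n, k_n)$ for some finitely generated abelian group $A_n$. By functoriality of Sullivan's rationalization, the corresponding tower for $Y_{(0)}$ has fibers $K(A_n \otimes \mathbb{Q}, k_n)$, and the rationalization map $Y \to Y_{(0)}$ is the levelwise map. Because $X$ is a finite complex, any map $X \to Y$ factors through some finite stage $Y_N$ (and likewise on the rational side), so it suffices to prove, by induction on $n$, that each $[X, Y_n] \to [X, (Y_n)_{(0)}]$ is finite-to-one.

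The inductive step is organized around the commutative square
\[\xymatrix{[X, Y_n] \ar[r] \ar[d] & [X, Y_{n-1}] \ar[d] \\ [X, (Y_n)_{(0)}] \ar[r] & [X, (Y_{n-1})_{(0)}].}\]
Fix $[\bar f] \in [X, (Y_n)_{(0)}]$. By the inductive hypothesis, its image in $[X, (Y_{n-1})_{(0)}]$ has only finitely many preimages $[g] \in [X, Y_{n-1}]$. For each such $[g]$, the set of its lifts to $[X, Y_n]$, when nonempty, is a quotient of a torsor under $H^{k_n}(X; A_n)$, while the corresponding set of rational lifts is a quotient of a torsor under $H^{k_n}(X; A_n \otimes \mathbb{Q})$, and the two structures are compatible with the change-of-coefficients map. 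Consequently, the number of integral lifts of $[g]$ that map to $[\bar f]$ is bounded by the cardinality of the kernel of $H^{k_n}(X; A_n) \to H^{k_n}(X; A_n \otimes \mathbb{Q})$. Since $X$ is finite, $H^{k_n}(X; A_n)$ is finitely generated, so this kernel --- its torsion subgroup --- is finite, which completes the induction.

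I expect the main obstacle to be a careful treatment of the torsor structures in the presence of twisted coefficients. The nilpotence of $Y$ is exactly the input needed to refine the Postnikov tower so that $\pi_1$ acts trivially on each Eilenberg--MacLane fiber, rendering the cohomology groups appearing above untwisted; this is where working in the nilpotent category (rather than just the simply-connected one) pays off. A secondary, more bookkeeping-style step is the reduction from free to based homotopy classes: one first establishes the based statement by the induction above, and then observes that on both the integral and rational sides the quotient by basepoint change is compatible with rationalization, so finiteness of based fibers gives finiteness of free fibers.
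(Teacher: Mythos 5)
Your high-level strategy — induction up a principal refinement of the Postnikov tower, with obstruction-theoretic lifting at each stage — is indeed how Sullivan and Hilton--Mislin--Roitberg prove this (and it is the skeleton of the quantitative analysis in Section 5 of this paper). But there is a genuine error in the key counting step, and it is precisely the error that this paper's entire premise is built around exposing.

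You claim that the number of integral lifts of $[g]$ sitting over a fixed rational lift is bounded by the cardinality of $\ker\bigl(H^{k_n}(X;A_n) \to H^{k_n}(X;A_n\otimes\mathbb{Q})\bigr)$, i.e.\ the torsion subgroup. That ``consequently'' does not follow. The set of lifts of $[g]$ is a coset space $H^{k_n}(X;A_n)/\Gamma$ where $\Gamma$ is the image of the boundary map $\pi_1\bigl((Y_{n-1})^X, g\bigr) \to H^{k_n}(X;A_n)$, and similarly the set of rational lifts is $H^{k_n}(X;A_n\otimes\mathbb{Q})/\Gamma_0$. The fiber of the map between these two coset spaces over a nonempty point has cardinality $\bigl[\rho^{-1}(\Gamma_0) : \Gamma\bigr]$, where $\rho$ is the change-of-coefficients map; this can be much larger than $|\ker\rho|$, because $\Gamma_0$ is typically a $\mathbb{Q}$-form whose preimage strictly contains $\Gamma + \ker\rho$. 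Concretely, Example \ref{ex:3x4} of this paper is a direct counterexample to your bound: for $X=S^3\times S^4$, $Y=S^4$, and $g$ of degree $d$, the relevant stabilizer is $\Gamma = 2d\mathbb{Z} \subset H^7(X;\mathbb{Z})\cong\mathbb{Z}$, so the fiber has $2d$ elements, while $\ker(\mathbb{Z}\to\mathbb{Q})=0$. If your bound held, every fiber of $[X,Y]\to[X,Y_{(0)}]$ would be uniformly bounded; the ``torsion growth'' that Theorems \ref{thm:tg} and \ref{torGrowth} quantify would not exist.

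What is actually needed is to show that $\Gamma$ has finite index in $\rho^{-1}(\Gamma_0)$, and this is where the work is. One must know that $\Gamma_0$ is the $\mathbb{Q}$-span of $\rho(\Gamma)$, so that $\rho^{-1}(\Gamma_0)$ is precisely the saturation of $\Gamma$ inside the finitely generated group $H^{k_n}(X;A_n)$ (up to the finite group $\ker\rho$), whence the index is finite though not uniformly bounded. But establishing $\Gamma_0 = \rho(\Gamma)\otimes\mathbb{Q}$ is not automatic: it requires comparing $\pi_1\bigl((Y_{n-1})^X,g\bigr)$ with $\pi_1\bigl(((Y_{n-1})_{(0)})^X,g_{(0)}\bigr)$, which is part (ii) of Sullivan's theorem and must be proved by a \emph{simultaneous} induction on all the $\pi_i$ of the mapping space, not just on the sets of lifts. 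Your write-up runs a single induction on $\pi_0$ only, so this crucial compatibility of the two torsor stabilizers under rationalization is assumed rather than established. The paper's Lemma \ref{injSur} makes exactly this simultaneous bookkeeping explicit via the quantitative four lemmas, tracking the surjectivity defect of the $\pi_1$ rationalization alongside the injectivity defect on lifts.

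Your remarks about using nilpotence to get principal fibrations with untwisted coefficients, and about the based-to-free reduction at the end, are both correct and standard.
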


This paper is devoted to understanding this more quantitatively.  If $X$ is the
sphere, say, then the number of preimages of a rational map is independent of the
map---when it is nonzero---since it is the cardinality of the kernel of the map
$\pi_i(Y) \to \pi_i(Y) \otimes \mathbb{Q}$.  However, even for as simple a target
as $S^4$ there are examples where the cardinalities of these fibers can be
unbounded.  If $X=S^3 \times S^4$, the sizes depend on the degree of the map
restricted to the $S^4$ factor.

The precise statement that we will make is that the sizes of these preimages
grows like a polynomial in the complexity of the homotopy class.  But this forces
the question of defining complexity.

There are two solutions to this new problem, both suggested by Gromov's
well-known paper \cite{GrDil}.
\begin{enumerate}
\item Replace $X$ and $Y$ by manifolds with boundary, and consider the norms that
  the maps induce on sufficiently large finite dimensional algebras of
  differential forms.  Unfortunately, unlike the minimal model, which is unique
  up to isomorphism (see section 2 for relevant concepts in the homotopy theory
  of commutative differential graded algebras), the algebras of differential
  forms are not.  We will see that, in consequence, although the notion of
  polynomially bounded is well-defined, the degree of the polynomial is not.
\item Fix (possibly piecewise or cell-wise) Riemannian metrics on $X$ and $Y$,
  and view the Lipschitz constant of $f:X \to Y$ as the complexity of the map.
  Minimizing this over representatives gives us a measure of complexity for
  homotopy classes.  For example, the complexity of a degree $d$ map
  $S^n \to S^n$ is roughly $d^{1/n}$.
\end{enumerate}
The notion in (1) does not require the homomorphism to be integral.  One just
asks for a map of DGAs with certain properties and considers the impacts on
norms.  For integral classes, this quantity can be estimated from the map itself
and bounded in terms of its Lipschitz constant.

On the other hand, the notion in (2) is only defined for integral mapping classes
and moreover depends a priori on the metric.  However, any homotopy equivalence
between finite metric complexes is homotopic to a Lipschitz homotopy
equivalence in the obvious sense.  Hence asymptotics with respect to it (up to
a multiplicative constant) are actually homotopy invariants.

The precise interconnection between (1) and (2) is complex.  Gromov noted in
\cite{GrDil}, and J.~Maher showed in more detail in his unpublished thesis
\cite{Maher}, that the rational invariants of Lipschitz maps to a nilpotent
complex (and therefore the notion in (1)) are bounded by a polynomial in the
Lipschitz constant.  In fact, it turns out that the minimal Lipschitz constant is
likewise polynomially bounded by the notion in (1); indeed, in \cite{PCDF} the
first author shows this by way of a purely rational notion of dilatation which
is equivalent to (2) up to a multiplicative constant.  This points to (2) as the
``correct'' notion of complexity for rational mapping classes even from an
algebraic-topological point of view.

We will show that 
\begin{thm} \label{thm:tg}
  The size of the preimages of maps in $[X, Y] \to [X, Y_{(0)}]$ is bounded by a
  polynomial in the complexity (in either sense).
\end{thm}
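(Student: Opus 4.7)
The plan is to induct on the Postnikov tower of $Y$. Since $X$ is a finite complex of some dimension $d$, the projections $Y \to Y_d$ and $Y_{(0)} \to Y_{d,(0)}$ induce bijections on $[X,-]$, so it suffices to bound preimage sizes stage by stage for $n \le d$. I would use compatible Postnikov towers in which each stage is a principal fibration $K(\pi_n(Y),n) \to Y_n \to Y_{n-1}$ with rationalization $K(\pi_n(Y)\otimes\mathbb{Q},n) \to Y_{n,(0)} \to Y_{n-1,(0)}$; for nilpotent $Y$ with $\pi_1 \neq 0$ the tower is refined using the lower central series so that every stage remains principal.

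The inductive claim is that there exists a polynomial $P_n$ such that every $\ph : X \to Y_{n,(0)}$ of complexity at most $L$ admits at most $P_n(L)$ integer lifts to $Y_n$. For the step from $n-1$ to $n$, let $\ph_{n-1}$ be the projection of $\ph$; by hypothesis it has at most $P_{n-1}(L)$ integer lifts $\tilde\ph_{n-1}$. For each such lift, the integer lifts to $Y_n$ form (if nonempty) a torsor over the twisted cohomology $H^n(X;\pi_n(Y))$ with coefficients pulled back via $\tilde\ph_{n-1}$, and the sub-torsor of those rationally homotopic to $\ph$ is a torsor over $\ker\bigl(H^n(X;\pi_n(Y)) \to H^n(X;\pi_n(Y)\otimes\mathbb{Q})\bigr)$, the torsion subgroup. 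So the count at this stage is at most $P_{n-1}(L)$ times the size of this torsion.

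The main obstacle is to bound this torsion polynomially in $L$, uniformly over the twists that arise from integer lifts of $\ph_{n-1}$. In the simply connected case there is no twisting and the torsion is a fixed finite group, giving a constant (hence polynomial) bound immediately. In general the twist is a representation $\pi_1(X) \to \Aut(\pi_n(Y))$ induced by $\tilde\ph_{n-1}$, and the torsion of the resulting twisted cohomology can genuinely grow with the ``size'' of this representation---as in the model computation $H^1(S^1;M_k) = \mathbb{Z}/(k-1)$, where $M_k$ denotes $\mathbb{Z}$ with the generator of $\pi_1(S^1)$ acting by multiplication by $k$. To close the argument I would show that $L$ polynomially controls the size of this representation (natural in the DGA-norm notion, where the complexity directly controls norms of cohomological pull-backs), and that the torsion of twisted cohomology of a finite complex with finitely generated coefficients grows at most polynomially in the size of the twist. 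For the Lipschitz notion of complexity I would then invoke the polynomial comparison between the two notions recorded in \cite{Maher} and \cite{PCDF} and alluded to in the introduction.
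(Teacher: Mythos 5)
Your proposed decomposition along the Postnikov tower is the right scaffolding, but the key step contains a mistake that misses the central phenomenon the theorem is about. You claim that, for a fixed integer lift $\tilde\ph_{n-1}$ of $\ph_{n-1}$, the integer lifts to $Y_n$ rationally homotopic to $\ph$ form a torsor over $\ker\bigl(H^n(X;\pi_n(Y))\to H^n(X;\pi_n(Y)\otimes\mathbb{Q})\bigr)$, and you conclude that when $Y$ is simply connected this is a \emph{fixed} finite group, giving a constant bound. This is false, and the paper's Example~3.1 is a direct counterexample: for $X=S^3\times S^4$, $Y=S^4$ (both simply connected), the preimage of the rational class of degree $d$ has $2|d|$ elements, unbounded in $d$. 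If your argument were correct, that preimage would have constant size.

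The subtlety you are missing is that homotopy classes of lifts over a given map $f:X\to Y_{n-1}$ do not form a torsor over $H^n(X;\pi_n(Y))$; they form a torsor over the quotient of $H^n(X;\pi_n(Y))$ by the image of the boundary map $\iota_1:\pi_1\bigl((Y_{n-1})^X,f\bigr)\to H^n(X;\pi_n(Y))$ (the exact sequence \eqref{ES:spaces} from the paper). That image depends on the basepoint map $f$ and is precisely what varies: in Example~3.1 the image over a degree-$d$ map is $2d\mathbb{Z}$, so the quotient is $\mathbb{Z}/2d$, growing with $d$. The same issue arises on the rational side with $\pi_1\bigl((Y_{n-1,(0)})^X,f_{(0)}\bigr)$. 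The number of integral lifts collapsing to one rational lift is governed by how much coarser the integral quotient is than the rational one, and that ratio depends on the \emph{cokernel of the rationalization map on $\pi_1$ of the mapping space}, not on the torsion of a fixed coefficient group. This has nothing to do with $\pi_1(X)$-twisting, which distracted you in the wrong direction; it is already nontrivial for simply connected $X$ and $Y$.

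Accordingly, the paper's proof (Lemma \ref{injSur} via the quantitative four lemmas in Section \ref{S:QFin}) does not count torsion but proves, by induction along the principal tower, that the rationalization $\pi_1\bigl((Y_k)^X,f\bigr)\to[\mathcal{M}^*_Y(k),A^*X\otimes\wedge e^{(1)}]_{f^*m_Y}$ is $P_k(\lVert\ph\rVert)$-\emph{surjective} for a polynomial $P_k$, and then uses this surjectivity estimate, via a quantitative injectivity four lemma, to bound how many integral lifting classes map to one rational lifting class. To repair your proposal you would need to replace the ``torsor over a fixed torsion group'' step by a polynomial bound on the index of the integral boundary image inside the rational one, uniformly in the lift $\tilde\ph_{n-1}$, which is exactly the content of the surjectivity part of Lemma \ref{injSur}(i).
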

There is another phenomenon that is dual to this that also needs to be
considered, namely a statement about the density of the image of $[X, Y]$ in
$[X, Y_{(0)}]$.  The image is always discrete, but the density, i.e.~the number
of image points in a ``ball of radius 1'', can grow as one moves farther from the
zero map.  However, this density turns out to be polynomial as well.

These conclusions can be summarized by the following theorem, proved by Gromov
when $X$ is a sphere in \cite{GrDil} and conjectured by him in
\cite[Ch.~7]{GrMS}.
\begin{thm} \label{thm:g}
  The number of homotopy classes of maps $[X, Y]$ that have a representative with
  Lipschitz constant at most $L$ is bounded by a polynomial in $L$.
\end{thm}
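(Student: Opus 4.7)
The plan is to factor the count through the rationalization $Y\to Y_{(0)}$. Writing $[X,Y]_L$ for the subset of $[X,Y]$ admitting a representative with Lipschitz constant at most $L$, one has the crude bound
\[
\#\,[X,Y]_L \;\leq\; \bigl(\#\img([X,Y]_L\to[X,Y_{(0)}])\bigr)\cdot \max_{[\bar f]}\,\#\bigl\{[f]\in[X,Y]_L:[f]\mapsto[\bar f]\bigr\},
\]
where the maximum runs over classes in the image. Each fiber on the right is polynomial in $L$ by Theorem \ref{thm:tg}, since every rational class in the image is realized by a Lipschitz-$L$ integral map and therefore has complexity at most $L$ in sense~(2). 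So the task reduces to showing that the first factor, the number of distinct rational classes realized by $[X,Y]_L$, is polynomial in $L$.

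To count these I would use Sullivan's minimal model $M_Y$ together with the identification of $[X,Y_{(0)}]$ with DGA homotopy classes of morphisms $M_Y\to A_{PL}(X)$. Since $X$ has finite dimension, $M_Y$ may be truncated above $\dim X$ without changing this set, so such a morphism is determined by the images of finitely many generators $y_1,\ldots,y_N$ of the truncation. The Maher--Gromov polynomial bound on rational invariants, recalled in the introduction, gives that the pullback of a fixed smooth representative of $y_i$ under a Lipschitz-$L$ map has $C^0$-norm $O(L^{|y_i|})$. Combined with the fixed integral structure on $X$ (coming from $H^*(X;\mathbb{Z})$ and, inductively, from each successive stage of the Postnikov tower of $Y$), the rational classes attained by $[X,Y]_L$ correspond to integer points in a region of polynomial radius inside a fixed finite-dimensional variety of DGA morphisms up to homotopy. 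Such a region contains only polynomially many integer points, giving the desired bound.

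The main obstacle will be making the stage-by-stage lattice-point count honest. At each new Postnikov stage the data attached to a new generator is only well-defined modulo an indeterminacy coming from lower-degree cohomology of $X$, so one must first quotient by this indeterminacy before invoking a lattice-point bound, and then verify that the resulting integral structure on the quotient is fixed (independent of $L$ and of the particular map). Once this is carried out uniformly over all stages of $M_Y$, the product of the two polynomial bounds yields Theorem \ref{thm:g}.
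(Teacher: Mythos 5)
Your decomposition \(\#[X,Y]_L \leq (\#\text{rational classes realized}) \cdot (\text{max fiber size})\) is valid as an inequality, and the fiber bound via Theorem \ref{thm:tg} is fine (once you observe, via the lower bound on \(\Lip\), that a Lipschitz-\(L\) map has a DGA representative of norm \(\leq P(L)\)). But that is not how the paper factors the argument: the paper proves a single quantitative statement (Theorem \ref{torGrowth}) that directly bounds the number of \emph{integral} classes whose rationalization lands in an \(R\)-ball of \(\Hom(\mathcal{M}_Y^*,W)\), by a stage-by-stage induction through the Postnikov tower using a quantitative four lemma. The paper never separately counts rational classes; it interleaves the ``how many obstruction choices'' factor and the ``how badly does rational injectivity fail'' factor at each Postnikov stage, producing the single product bound \eqref{ballBound}.

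The genuine gap in your plan is the third step, counting rational classes. You assert that after quotienting by the homotopy indeterminacy at each Postnikov stage, ``the resulting integral structure on the quotient is fixed (independent of \(L\) and of the particular map).'' This is exactly the thing that fails, and the paper's Section 3 examples are constructed precisely to illustrate it: for maps \(S^3\times S^4\to S^4\) of degree \(d\), the Hopf invariant on the top cell is defined modulo \(2d\), so the lattice obtained after quotienting by homotopy changes with \(d\); and in the second example the density of integral points among rational classes grows with \(\max\{\alpha_1,\alpha_2\}\). In other words, the quotient by the \(\pi_1((Y_{k-1})^X,f)\)-action at stage \(k\) has an integral structure that varies with the base map \(f\), and controlling how much it varies is exactly what the quantitative four lemma is for (the constants \(\tau_k\) and the comparison between the left and right norms \(\lVert\cdot\rVert_l,\lVert\cdot\rVert_r\)). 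Without that machinery, your lattice-point count has no way to be made uniform over the stages.

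A secondary remark: even granting your decomposition, a bound on the number of rational classes realized by Lipschitz-\(L\) maps is not obviously any easier to prove than the bound on integral classes (the realized rational classes are a quotient of the realized integral ones), so there is some risk of circularity unless you provide an independent argument. The paper sidesteps this by never passing to the rational count at all.
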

However, in section 3, we will see that (contrary to the speculation in Gromov's
book \cite[p.~358]{GrMS}) the number is not necessarily asymptotic to a
polynomial.  We give an example that has an extra $\log(L)$ factor.

In all cases we work with based maps, but the corresponding results for unbased
maps follow easily.  Moreover, our examples are all in the realm of simply
connected spaces, where these notions are equivalent.

\subsection{Rational invariance} \label{S:ratinv}

The techniques in the proofs of Theorems \ref{thm:tg} and \ref{thm:g} are a
variation of the work of Sullivan.  It is natural to ask whether the growth rates
in these problems, which we call \emph{torsion growth}\footnote{We hope not
  confusingly, since there is only growth in situations where the mapping set
  does \emph{not} have a group structure.} and \emph{growth}, respectively, are
actually invariants of the rational homotopy type of $Y_{(0)}$ and not just the
integral homotopy type of $Y$.  It is unclear whether this is true in general,
but we prove the following partial result:
\begin{thm} \label{thm:posw}
  Let $X$ and $Y$ be finite metric complexes with $Y$ simply connected.  If $Y$
  (resp.~$X$) is a \emph{space with positive weights}, then the asymptotic
  behavior of the growth $g_{[X,Y]}$ and the torsion growth $\mathrm{tg}_{[X,Y]}$
  depends only on the rational homotopy type of $Y$ (resp.~$X$).
\end{thm}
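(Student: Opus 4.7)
The plan is to exploit the weight grading on the minimal model of a space with positive weights to produce a family of geometric self-maps on the rationalization, and to use these to transport Lipschitz and torsion estimates across different integral forms of the same rational homotopy type.

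First I would recall the main consequence of positive weights: the minimal model $M_Y$ of $Y$ admits a second positive $\mathbb{Q}$-grading (by weight) preserved by the differential, so that for every $t \in \mathbb{Q}^*$ there is a DGA automorphism $\sigma_t$ acting as $v \mapsto t^w v$ on a weight-$w$ generator. After Sullivan realization these assemble into a family of self-maps $\psi_t \colon Y_{(0)} \to Y_{(0)}$. Choosing a cellular model with controlled geometry, the Lipschitz constant of $\psi_t$ is at most $C |t|^W$ where $W$ is the maximum weight. The crucial injectivity property is that $\psi_t$ induces injective maps on $[X, Y_{(0)}]$ for $t \neq 0$, since on the minimal model it is an automorphism.

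Next, let $Y'$ be any other finite complex with $Y'_{(0)} \simeq Y_{(0)}$. For some positive integer $t$ divisible enough to clear the finitely many torsion denominators that arise in Sullivan's obstruction theory for building maps between integral models of a fixed rational type, the composition $Y \to Y_{(0)} \xrightarrow{\psi_t} Y_{(0)} \simeq Y'_{(0)}$ lifts to an honest map $\Phi \colon Y \to Y'$, and symmetrically one obtains $\Phi' \colon Y' \to Y$ with $\Phi \circ \Phi'$ and $\Phi' \circ \Phi$ rationally equal to $\psi_{t^2}$. Both $\Phi$ and $\Phi'$ may be taken Lipschitz with constant $K$ depending only on $Y, Y', t$.

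I would then transfer the counting estimates by postcomposition. Given $f \colon X \to Y$ with $\Lip f \le L$, one has $\Lip(\Phi \circ f) \le KL$, so postcomposition sends the set of integral classes with complexity $\le L$ into those of complexity $\le KL$ in $[X, Y']$. On rationalizations this postcomposition is the injective map $\psi_t$, so two integral classes $[f_1] \neq [f_2] \in [X, Y]$ with distinct rational images yield distinct rational images in $[X, Y']$; the remaining collapse is bounded by the torsion growth of $[X, Y]$, which is polynomial by Theorem \ref{thm:tg}. This yields
\[
g_{[X,Y]}(L) \le \mathrm{tg}_{[X,Y]}(C L) \cdot g_{[X,Y']}(KL),
\]
and the symmetric estimate via $\Phi'$ gives the reverse inequality, so the polynomial degrees of $g_{[X,Y]}$ and $g_{[X,Y']}$ coincide. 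Running the same argument on rational fibers, replacing $g$ by $\mathrm{tg}$, proves the torsion-growth statement. The case when $X$ carries positive weights is dual: one precomposes with a realization of $\psi_t \colon X \to X$ (precomposition contravariantly scales Lipschitz constants) and runs the same bookkeeping.

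The main obstacle I expect is the second step: producing the geometric lift $\Phi \colon Y \to Y'$ of $\psi_t$ with Lipschitz constant depending only on $t$, $Y$, $Y'$. One must simultaneously exploit the positive-weights hypothesis to kill obstructions (after choosing $t$ divisible enough) and invoke the quantitative Sullivan-style realization underlying Theorems \ref{thm:tg} and \ref{thm:g} to ensure that the resulting lift is not only homotopic to a map with controlled Lipschitz constant, but that this constant is independent of subsequent postcomposition with $f \colon X \to Y$. The rest of the argument is formal manipulation of the resulting Lipschitz and torsion bounds.
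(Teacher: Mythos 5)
Your construction of the comparison maps $\Phi\colon Y\to Y'$ and $\Phi'\colon Y'\to Y$ via the weight automorphisms is essentially a hands-on reconstruction of the $0$-universality property of positive-weight spaces, which the paper instead cites directly from \cite{BMSS}; up to that point the two arguments are the same in spirit, and the idea of tracking Lipschitz constants through postcomposition (resp.\ precomposition for the $X$ case) is exactly how the paper proceeds.

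The gap is in the step where you bound the collapse of $\Phi_*\colon[X,Y]\to[X,Y']$. You observe that $\Phi_*$ is injective after rationalization and then bound the fiber of $\Phi_*$ over a class of complexity $\le KL$ by $\mathrm{tg}_{[X,Y]}(CL)$, which is genuinely a growing function of $L$ (the examples in Section 3 show nontrivial torsion growth). This gives
\[
g_{[X,Y]}(L)\le \mathrm{tg}_{[X,Y]}(CL)\cdot g_{[X,Y']}(KL),
\]
and the symmetric bound, but these only pin the two growth functions to within a polynomial factor of each other, not a multiplicative constant. For polynomials $g_{[X,Y]}\sim L^{b}$, $g_{[X,Y']}\sim L^{b'}$, $\mathrm{tg}\sim L^{a}$, this yields only $|b-b'|\le\max(a,a')$, which is not enough to conclude that $g$ (or $\mathrm{tg}$) is a rational-homotopy invariant. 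What you actually need is a \emph{constant} bound on the fibers of $\Phi_*$, which is exactly the content of Proposition \ref{prop:Fto1}: a rational homotopy equivalence between finite nilpotent complexes induces a uniformly finite-to-one map on $[X,-]$, with the bound coming from a relative Postnikov tower argument. Since $\Phi$ realizes the rational automorphism $\psi_t$, $\Phi_*$ has fibers of size at most some $C(\Phi,X)$ independent of $L$, and then the paper's two-sided estimate
\[
g_{[X,Y]}(L)\le C\,g_{[X,Y']}(\Lip(\Phi)L)\le C'C\,g_{[X,Y]}(\Lip(\Phi')\Lip(\Phi)L)
\]
does force the growth functions to agree up to a multiplicative constant.

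So your argument is morally right but the quantitative input is too weak: replace the torsion-growth bound on the collapse with the uniform finiteness result of Proposition \ref{prop:Fto1} (or prove that directly from the finiteness of the relative homotopy groups of $\Phi$), and the rest of your bookkeeping goes through.
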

Having positive weights is a technical condition on the rational homotopy type of
a simply connected space first introduced by Morgan and
Sullivan\footnote{According to \cite{BMSS}, although this class of spaces was
  studied earlier by Mimura and Toda \cite{MT}.}.  The main property of such
spaces is that they have a large family of ``telescoping'' automorphisms.  Many
naturally occurring simply connected spaces have positive weights; in particular,
all of our examples do.  Therefore, for example, the $L^8\log L$ growth we show
for $[(S^3 \times S^4)^{\#2},S^4]$ is a rational homotopy invariant.  On the other
hand, this theorem does not give any information about non-simply connected
nilpotent spaces.

For more general spaces, we can only say something much weaker:
\begin{prop} \label{prop:Fto1}
  Given a rational homotopy equivalence $Y \to Z$ between finite nilpotent
  complexes, for any finite complex $X$, the induced map $[X,Y] \to [X,Z]$ is
  uniformly finite-to-one; i.e., preimages of classes have bounded size.
\end{prop}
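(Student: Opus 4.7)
The plan is to factor $Y \to Z$ through the principal refinement of its Moore--Postnikov tower and run obstruction theory stage by stage, exploiting the fact that the homotopy fiber has \emph{finite} homotopy groups.

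First I would replace the given rational equivalence by a fibration and consider its homotopy fiber $F$. From the long exact sequence, each $\pi_n(F)$ is an extension of $\ker(\pi_n(Y) \to \pi_n(Z))$ by $\coker(\pi_{n+1}(Y) \to \pi_{n+1}(Z))$; since $Y$ and $Z$ are finite nilpotent complexes, both $\pi_n(Y)$ and $\pi_n(Z)$ are finitely generated nilpotent groups, and because the map between them is a rational isomorphism, both the kernel and the cokernel are finite. Thus every $\pi_n(F)$ is a finite (nilpotent) group. This is the crucial structural difference from the classical finiteness theorem, where $Y_{(0)}$ is not of finite type and the corresponding fiber homotopy groups are infinite.

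Next I would apply the principal refinement of the Moore--Postnikov tower of the nilpotent fibration $F \to Y \to Z$: this writes $Y$ as the homotopy limit of a tower $\cdots \to W_2 \to W_1 \to W_0 = Z$ in which each stage $W_i \to W_{i-1}$ is a principal $K(A_i, n_i)$-fibration with $A_i$ a finite abelian subquotient of $\pi_{n_i - 1}(F)$, and only finitely many indices $i$ satisfy $n_i \leq \dim X + 1$. Since $X$ is finite-dimensional, the map $[X, Y] \to [X, Z]$ is the composite of the finitely many $[X, W_i] \to [X, W_{i-1}]$ with $n_i \leq \dim X + 1$, and standard obstruction theory identifies each nonempty fiber of such a stage map with a torsor under $H^{n_i}(X; A_i)$. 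This cohomology group is finite because $X$ and $A_i$ are both finite, and the product $\prod_i |H^{n_i}(X; A_i)|$ over the relevant stages provides the desired uniform bound, independent of the chosen class in $[X, Z]$.

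The main technical obstacle is correctly setting up the principal refinement in the merely nilpotent (rather than simply connected) setting: one must control the $\pi_1$-action on the higher homotopy groups of $F$ and run a sufficiently fine Postnikov filtration so that each successive fiber is a genuine Eilenberg--MacLane space with abelian coefficients, and one must verify that the torsor structure on the lift sets is available in that generality. Both of these points are handled in the standard treatments of nilpotent spaces, after which the bound assembles automatically; the unbased version is deduced from the based one by the usual quotient argument.
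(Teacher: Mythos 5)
Your proposal is correct and matches the paper's proof in essence: both run obstruction theory up a principal refinement of the Moore--Postnikov tower of $\ph:Y\to Z$, noting that each layer has a \emph{finite} abelian coefficient group (your $\pi_{n_i-1}(F) \cong \pi_{n_i}(Z,Y)$), so that each lift set is a torsor under a finite cohomology group $H^{n_i}(X;A_i)$, yielding the uniform bound as a finite product. The only presentational difference is that you pass through the homotopy fiber $F$ and work with $\pi_*(F)$, while the paper describes the same tower directly via relative homotopy groups $\pi_*(Z,Y)$; your extra care about the principal refinement in the merely nilpotent case is a valid point that the paper leaves implicit.
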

This is insufficient even to prove rational invariance of torsion growth because,
for example, there may be classes in $[X,Z_{(0)}]$ that have quickly growing
preimages in $[X,Z]$, but no preimages at all in $[X,Y]$.

The difficulties may be number-theoretic: in general, the integral classes of
homomorphisms $X \to Y$ are integer points of an arbitrarily complicated
algebraic variety cut out by the differentials of $X$ and $Y$.  It is unclear
what rationally invariant estimates can be found in general.

\subsection{Structure of the paper}
Section 2 provides the background about DGAs and their connection to
rational homotopy theory.  We recommend the (impatient) reader skip partway
through to section 3, which gives examples of the various phenomena that this
paper grapples with, and then go back to complete section 2.  In section 4 we
explain methods (1) and (2) for defining sizes of maps, and in section 5, we
prove our main theorem by combining the ideas of section 4 with Sullivan's
inductive method.  Finally the last section addresses the rational invariance
problem.

\subsection{Acknowledgements}
The first author would like to thank MSRI for its hospitality during
September--October 2017, and both authors would like to thank the Israel
Institute for Advanced Studies at Hebrew University for its hospitality during
(the rest of) Fall 2017.  We would like to thank two anonymous referees for a
number of useful comments drawing our attention to errors and disfluencies in the
presentation.

\section{Homotopy theory of DGAs}

In this section we sketch out the homotopy theory of differential graded
algebras, following the treatment in \cite[Ch.~IX and X]{GrMo}.  Their relatively
explicit formulation helps us obtain quantitative bounds.  This justifies a
thorough exposition as the formalism may differ from more abstract modern
treatments.

A \emph{(commutative) differential graded algebra} (DGA) will always denote a
cochain complex of $\mathbb{Q}$- or $\mathbb{R}$-vector spaces equipped with a
graded commutative multiplication which satisfies the (graded) Leibniz
rule.\footnote{We use the abbreviation ``DGA'' for ``differential graded
  algebra'', following \cite{GrMo} and \cite{FHT}.  In other areas this
  abbreviation may be reserved for augmented algebras.  Minimal algebras have a
  natural augmentation which sends indecomposables to zero, but cochain algebras
  generally do not.}  The prototypical example of an $\mathbb{R}$-DGA is the
algebra of smooth forms on a manifold or piecewise smooth forms on a simplicial
complex.  On a simplicial complex $X$, one can also define a $\mathbb{Q}$-DGA
$A^*X$ of \emph{polynomial forms}; see \cite[Ch.~VIII]{GrMo} for a detailed
exposition.  In the rest of the section we will denote $\mathbb{Q}$ or
$\mathbb{R}$ by $\mathbb{F}$.

The cohomology of a DGA is the cohomology of the underlying cochain complex.  The
relative cohomology $H^n(\ph)$ of a DGA homomorphism
$\ph:\mathcal{A} \to \mathcal{B}$ is defined to be the cohomology of the cochain
complex
$$C^n(\ph)=\mathcal{A}^n \oplus \mathcal{B}^{n-1}$$
with the differential given by $d(a,b)=(da,\ph(a)-db)$.  This cohomology fits, as
expected, into the obvious exact sequence involving $H^*(\mathcal{A})$ and
$H^*(\mathcal{B})$.

Given a finite-dimensional vector space $V$, we write $H^*(\mathcal{A};V^*)$,
where $V^*$ is the dual of $V$, for the cohomology of the cochain complex
$\Hom(V,\mathcal{A})$.  By the universal coefficient theorem, this is naturally
isomorphic to $\Hom(V,H^*(\mathcal{A}))$, but we will refer to individual
cochains in the former format.

A \emph{weak equivalence} between DGAs $\mathcal{A}$ and $\mathcal{B}$ is a
homomorphism $\mathcal{A} \to \mathcal{B}$ which induces an isomorphism on
cohomology.

An algebra $\mathcal{A}$ is \emph{simply connected} if $\tilde H^0(\mathcal{A})=
H^1(\mathcal{A})=0$.  If $\mathcal{A}$ is simply connected and of
\emph{finite type} (i.e.\ it has finite-dimensional cohomology in every degree)
then it has a \emph{minimal model}: a weak equivalence
$m_{\mathcal{A}}:\mathcal{M_A} \to \mathcal{A}$ where $\mathcal{M_A}$ is freely
generated as an algebra by finite-dimensional vector spaces $V_n$ in degree $n$,
written
$$\mathcal{M_A}=\bigwedge_{n=2}^\infty V_n,$$
and the differential satisfies
$$dV_n \subseteq \bigwedge_{k=2}^{n-1} V_k.$$
In other words, $\mathcal{M_A}$ can be built up via a sequence of
\emph{elementary extensions}
$$\mathcal{M_A}(n+1)=\mathcal{M_A}(n) \otimes {\wedge} V_{n+1}$$
with a differential extending that on $\mathcal{M_A}(n)$, starting with
$\mathcal{M_A}(1)=\mathbb{Q}$ or $\mathbb{R}$.  We refer to elements of the $V_n$
as \emph{indecomposables}.  We will often define finitely generated free DGAs by
indicating the degree of generators as superscripts in parentheses: $a^{(3)}$
means that $a$ is an indecomposable generator in degree 3.

In particular, if $Y$ is a manifold or simplicial complex which is simply
connected and of finite cohomological type, the algebras of forms
$\mathcal{A}=A^*Y$ or $\Omega^*Y$ each have minimal models, both of which we will
call $m_Y:\mathcal{M}_Y^* \to \mathcal{A}$ (this notational confusion will not
cause us any problems).  This models the Postnikov tower of $Y$: each
$V_n \cong \Hom(\pi_n(Y),\mathbb{F})$ and the differential on $V_n$ is dual to
the $k$-invariant of the fibration $Y_{(n)} \to Y_{(n-1)}$.  This is shown
inductively via the obstruction theory discussed below.

More generally, suppose $Y$ is a \emph{nilpotent space}: that is, its fundamental
group is nilpotent and acts nilpotently on the higher homotopy groups.  Then it
still has a minimal model $m_Y:\mathcal{M}_Y^* \to \mathcal{A}$, in the sense
that it is built as a limit of extensions
$\mathcal{M}_Y^*(n+1)=\mathcal{M}_Y^*(n) \otimes {\wedge}V_{n+1},$
but now $n$ can no longer denote the degree of the extension as the degrees
$$1 \leq \deg V_1 \leq \cdots \leq \deg V_n \leq \deg V_{n+1} \leq \ldots$$
need not be strictly increasing.  In other words, the $k$th Postnikov stage
yields a finite sequence of elementary extensions which correspond to a
decomposition of the Postnikov stage $K(\pi_k(Y),k) \to Y_{(k)} \to Y_{(k-1)}$ into
a sequence of principal fibrations.

Even more generally, we say $\mathcal{A}$ is \emph{geometric for} a space $Y$ if
there is a weak equivalence $\mathcal{A} \to A^*Y$ or $\Omega^*Y$.

\subsection{Obstruction theory}

Given a principal fibration $K(\pi,n) \to E \to B$, obstruction theory gives an
exact sequence of based sets
$$H^n(X;\pi) \to [X,E] \to [X,B] \to H^{n+1}(X;\pi)$$
of sets of based homotopy classes; see e.g.\ \cite[Prop.~14.3]{GrMo}.  Moreover,
over a given map $f:X \to B$, there is an exact sequence of groups
$$\cdots \to \pi_1(E^X,\tilde f) \to \pi_1(B^X,f) \to H^n(X;\pi) \to
\{\text{lifts of }f\} \to 0,$$
where the set of lifts is not a group but has a transitive action by
$H^n(X;\pi)$.

We now give DGA versions of these statements.  First define homotopy of DGA
homomorphisms as follows: $f,g:\mathcal{A} \to \mathcal{B}$ are homotopic if
there is a homomorphism
$$H:\mathcal{A} \to \mathcal{B}\otimes {\wedge}(t^{(0)},dt^{(1)})$$
such that $H|_{\substack{t=0\\dt=0}}=f$ and $H|_{\substack{t=1\\dt=0}}=g$.  We think of
$\wedge(t,dt)$ as an algebraic model for the unit interval and this notion as an
abstraction of the map induced by an ordinary smooth or simplicial homotopy.  In
particular, it defines an equivalence relation \cite[Cor.~10.7]{GrMo}.

We also introduce some notation which is useful for constructing homotopies
between DGA homomorphisms.  For any DGA $\mathcal A$, define an operation
$\int_0^t:\mathcal A \otimes {\wedge}(t,dt) \to
\mathcal A \otimes {\wedge}(t,dt)$ of degree $-1$ by
$${\textstyle\int_0^t a \otimes t^i}=0,\hspace{4em}
  {\textstyle\int_0^t a \otimes t^idt}=(-1)^{\deg a}a \otimes \frac{t^{i+1}}{i+1}$$
and an operation $\int_0^1:\mathcal A \otimes {\wedge}(t,dt) \to
\mathcal A$ of degree $-1$ by
$${\textstyle\int_0^1 a \otimes t^i}=0,\hspace{4em}
  {\textstyle\int_0^1 a \otimes t^idt}=(-1)^{\deg a}\frac{a}{i+1}.$$
These provide a formal analogue of fiberwise integration; in particular, they
satisfy the identities
\begin{align}
  d\bigl({\textstyle\int_0^t} u\bigr)+{\textstyle\int_0^t} du &=
  u-u|_{\substack{t=0\\dt=0}} \otimes 1 \\
  d\bigl({\textstyle\int_0^1} u\bigr)+{\textstyle\int_0^1} du &=
  u|_{\substack{t=1\\dt=0}}-u|_{\substack{t=0\\dt=0}}. \label{eqn:I01}
\end{align}

Now we state the main lemma of obstruction theory, which gives the conditions
under which a map can be extended over an elementary extension.
\begin{prop}[10.4 in \cite{GrMo}] \label{extExist}
  Let $\mathcal{A}\otimes {\wedge}V$ be a degree $n$ elementary extension of a
  DGA $\mathcal{A}$.  Suppose we have a diagram of DGAs
  $$\xymatrix{
    \mathcal{A} \ar[r]^f \ar@{^{(}->}[d] & \mathcal{B} \ar[d]^h \\
    \mathcal{A}\otimes {\wedge}V \ar[r]^g & \mathcal{C}
  }$$
  with $g|_{\mathcal{A}} \simeq hf$ by a homotopy $H:\mathcal{A} \to \mathcal{C}
  \otimes {\wedge}(t,dt)$.  Then the map $O:V \to
  \mathcal{B}^{n+1} \oplus \mathcal{C}^n$ given by
  $$O(v)=\left(f(dv), g(v)+{\textstyle\int_0^1 H(dv)}\right)$$
  defines an obstruction class $[O] \in H^{n+1}(h:\mathcal{B}\to\mathcal{C};V^*)$
  to producing an extension $\tilde f:\mathcal{A}\otimes {\wedge}V \to
  \mathcal{B}$ of $f$ with $h \circ \tilde f \simeq g$ via a homotopy $\tilde H$
  extending $H$.
\end{prop}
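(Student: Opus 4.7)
The plan is to split the proof into a straightforward cocycle check and a construction whose crucial compatibility is handed to us by the coboundary relation. Since $\mathcal{A} \otimes {\wedge}V$ is freely generated over $\mathcal{A}$ by $V$, any DGA extension of $f$ is determined by a linear map $V \to \mathcal{B}^n$ whose composition with $d$ agrees with $f \circ d$, and analogously for an extension of $H$. I would first verify that $O \colon V \to C^{n+1}(h) = \mathcal{B}^{n+1} \oplus \mathcal{C}^n$ is a cocycle: the $\mathcal{B}$-component of $dO(v)$ is $df(dv) = f(d^2 v) = 0$, and the $\mathcal{C}$-component is $hf(dv) - g(dv) - d\int_0^1 H(dv)$, which vanishes by applying identity (\ref{eqn:I01}) to the cocycle $H(dv)$, whose endpoint values at $t=0$ and $t=1$ are $g(dv)$ and $hf(dv)$.

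For necessity, suppose an extension $(\tilde f, \tilde H)$ exists. Setting $\Psi(v) := (\tilde f(v), \int_0^1 \tilde H(v)) \in \mathcal{B}^n \oplus \mathcal{C}^{n-1}$ and applying (\ref{eqn:I01}) to $\tilde H(v)$ --- whose differential is $H(dv)$ and whose endpoint values are $g(v)$ and $h\tilde f(v)$ --- yields $d\Psi = O$. Conversely, if $O = d\Psi$ with $\Psi(v) = (b_v, c_v)$, the $\mathcal{B}$-component of $d\Psi = O$ reads $db_v = f(dv)$, so defining $\tilde f(v) := b_v$ extends $f$ to a DGA map by freeness; the $\mathcal{C}$-component reads
$$hb_v - dc_v = g(v) + {\textstyle\int_0^1} H(dv),$$
which is exactly the compatibility we need to produce $\tilde H(v)$.

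The genuinely nontrivial step is therefore the construction of $\tilde H(v)$ of total degree $n$ in $\mathcal{C} \otimes {\wedge}(t,dt)$ satisfying $d\tilde H(v) = H(dv)$, $\tilde H(v)|_{t=0, dt=0} = g(v)$, $\tilde H(v)|_{t=1, dt=0} = hb_v$, and $\int_0^1 \tilde H(v) = c_v$. This is a Poincar\'{e}-lemma problem on the interval algebra ${\wedge}(t,dt)$: expanding $\tilde H(v) = \sum_i P_i \otimes t^i + \sum_j Q_j \otimes t^j\,dt$ and the known $H(dv)$ in the same form, the equation $d\tilde H(v) = H(dv)$ becomes a recursion that determines the $P_{i+1}$ from the $Q_j$, $dQ_j$, and the coefficients of $H(dv)$. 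The $t=0$ endpoint fixes $P_0 = g(v)$, the remaining freedom in the $Q_j$'s is absorbed by the prescribed value of $\int_0^1 \tilde H(v)$, and the $t=1$ endpoint value and integral value are consistent precisely because of the displayed compatibility equation. I expect this last step to be where the signs and indices require the most care; the rest of the argument is essentially formal.
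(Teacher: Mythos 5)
Your decomposition — check that $O$ is a cocycle in $\Hom(V,C^*(h))$, then show an extension $(\tilde f,\tilde H)$ exists iff $O$ is a coboundary — is exactly the standard (Griffiths--Morgan) proof and is the one the paper is implicitly using. The cocycle check and the necessity direction (taking $\Psi(v)=(\tilde f(v),\int_0^1\tilde H(v))$ and applying \eqref{eqn:I01}) are carried out correctly.

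The one place you punt is also the one place you shouldn't need to: the construction of $\tilde H$. You frame it as a Poincar\'e-lemma recursion on the coefficients of $t^i$ and $t^j\,dt$, which is essentially re-deriving the operator $\int_0^t$ that the paper already introduced together with its defining identity $d\bigl(\int_0^t u\bigr)+\int_0^t du = u - u|_{t=0,dt=0}\otimes 1$. That identity packages the entire recursion into one line, and the paper spells out the resulting closed formula immediately after the proposition: given $(b,c)$ with $d(b,c)=O$, set $\tilde f(v)=b(v)$ and
$$\tilde H(v)= g(v) + {\textstyle\int_0^t} H(dv) + d(c(v)\otimes t),$$
after which $d\tilde H(v)=H(dv)$ is immediate from the identity above, and the two endpoint conditions and $\int_0^1\tilde H(v)=c(v)$ fall out by inspection. (The formula as printed in the body of the paper has $h\circ\tilde f(v)$ as the leading term; with the orientation convention $H|_{t=0,dt=0}=g|_{\mathcal A}$, under which your cocycle computation is the correct one, the leading term should be $g(v)$, so do double-check the signs against whichever convention you fix.) Two small further remarks: requiring $\int_0^1\tilde H(v)=c(v)$ is a convenient normalization, not a logical necessity — existence of \emph{some} $\tilde H$ suffices; and when you invoke ``freeness'' for $\tilde f$, you are implicitly using that $dv\in\mathcal A$ (this is what ``elementary extension'' gives you), so that $d\tilde f(v)=f(dv)$ is a condition on data you already have.
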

When the obstruction vanishes, there are maps
$(b,c): V \to (\mathcal{B}^n,\mathcal{C}^{n-1})$ such that $d(b,c)=O$, i.e.
\begin{align*}
  db(v) &= f(dv) \\
  dc(v) &= h\circ b(v)-g(v)-{\textstyle\int_0^1 H(dv)}.
\end{align*}
Then for $v \in V$ we can set $\tilde f(v)=b(v)$ and
$$\tilde H(v)=h \circ \tilde f(v)+{\textstyle\int_0^t H(dv)}+d(c(v) \otimes t).$$
This gives a specific formula for the extension.

There is also a relative version of this proposition, as in
\cite[Prop.~10.5]{GrMo}.  This can be used to prove the following.
\begin{prop} \label{14.4ext}
  Let $\mathcal{A} \otimes {\wedge V}$ be a degree $n$ elementary extension of a
  DGA $\mathcal{A}$.  Let $h:\mathcal{B} \to \mathcal{C}$ be a surjection of
  DGAs, and $\mathcal{A} \otimes {\wedge V} \xrightarrow{\ph} \mathcal{C}$ be a
  map.  Then there is an exact sequence of based sets
  $$H^n(h:\mathcal{B} \to \mathcal{C};V^*) \to
  [\mathcal{A} \otimes {\wedge V},\mathcal{B}]_\ph \to
  [\mathcal{A},\mathcal{B}]_{\ph|_\mathcal{A}} \to
  H^{n+1}(h:\mathcal{B} \to \mathcal{C};V^*)$$
  of homotopy classes of lifts of $\ph$.  Moreover, for every lift
  $\psi:\mathcal{A} \to \mathcal{B}$ of $\ph|_{\mathcal{A}}$, there is an exact
  sequence of groups and a set
  $$[\mathcal{A},\mathcal{B} \otimes {\wedge e^{(1)}}]_{\psi \otimes 1}
  \xrightarrow{\mathcal{O}} H^n(h:\mathcal{B} \to \mathcal{C};V^*) \to
  \left\{\begin{array}{c}\text{extensions of }\psi\\
  \text{in }[\mathcal{A} \otimes \wedge V,\mathcal{B}]_\ph\end{array}\right\}
  \to 0.$$
  Here in the first term, we are looking at lifts of $\psi \otimes 1$ as a map
  $$\mathcal{A} \to (\mathcal{B}\otimes{\wedge e})/\ker h \otimes (e),$$
  that is, self-homotopies of $\psi$ which project to
  $\ph|_{\mathcal{A}} \otimes 1$, and the obstruction $\mathcal{O}$ sends
  $$\psi+\eta \otimes e \mapsto (\eta d|_V,0).$$
\end{prop}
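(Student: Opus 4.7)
The plan is to deduce both sequences from Proposition~\ref{extExist} together with its relative version \cite[Prop.~10.5]{GrMo}. The two sequences encode, respectively, the problem of lifting $\varphi|_\mathcal{A}$ and then extending it over the free indecomposables $V$, and the problem of classifying such extensions of a fixed lift $\psi$. Because $\mathcal{A}\otimes\wedge V$ is a \emph{single} elementary extension, both questions are controlled by the $V$-valued cohomology of the cone on $h$, and the exact sequences will be obtained by promoting the ``obstruction vanishes iff extension exists'' statement of Proposition~\ref{extExist} to the level of homotopy classes.

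First I would define the maps in the lifting sequence. The map $[\mathcal{A}\otimes\wedge V,\mathcal{B}]_\varphi \to [\mathcal{A},\mathcal{B}]_{\varphi|_\mathcal{A}}$ is just restriction. The boundary map to $H^{n+1}(h;V^*)$ sends the class of $\psi:\mathcal{A}\to\mathcal{B}$ to the obstruction $[O]$ of Proposition~\ref{extExist}, computed with respect to some chosen homotopy $H$ witnessing $h\psi\simeq\varphi|_\mathcal{A}$; a direct calculation with the operations $\int_0^t$ and $\int_0^1$ and identity~\eqref{eqn:I01} shows that altering $\psi$ within its class or changing $H$ modifies $O$ by a coboundary. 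Exactness at $[\mathcal{A},\mathcal{B}]_{\varphi|_\mathcal{A}}$ is then the content of Proposition~\ref{extExist}. The map from $H^n(h;V^*)$ encodes the freedom to modify an extension $\tilde f_0$ by adding $b\in\Hom(V,\mathcal{B}^n)$ to $\tilde f_0|_V$, where $(b,c)$ represents a cocycle and $c\in\Hom(V,\mathcal{C}^{n-1})$ adjusts the homotopy down to $\varphi$. Exactness at the middle amounts to the statement that any two extensions of $\psi$ compatible with $\varphi$ differ by such a modification, which I would prove by applying the relative version of Proposition~\ref{extExist} to the cylinder $(\mathcal{A}\otimes\wedge V)\otimes\wedge(t,dt)$ whose two faces are the given extensions.

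For the second sequence, the cokernel description is again Proposition~\ref{extExist}: any two extensions $\tilde f_1,\tilde f_2$ of a fixed $\psi$ determine a difference pair $(b,c)\in C^n(h;V^*)$ with $b=\tilde f_2|_V-\tilde f_1|_V$ and $c$ recording the homotopy of their images in $\mathcal{C}$, and conversely every cocycle arises this way, yielding the surjection $H^n(h;V^*)\twoheadrightarrow\{\text{extensions of }\psi\}$. The remaining work is to identify the indeterminacy. A self-homotopy $H_\psi:\mathcal{A}\to\mathcal{B}\otimes\wedge(t,dt)$ of $\psi$ projecting to $\varphi|_\mathcal{A}\otimes 1$ gives rise, by integrating out $dt$, to a degree $-1$ map $\eta:\mathcal{A}\to\mathcal{B}$ landing in $\ker h$, which is exactly the ``coefficient of $e$'' in a lift of $\psi\otimes 1$ to $(\mathcal{B}\otimes\wedge e)/\ker h \otimes (e)$. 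The formula from Proposition~\ref{extExist} shows that prepending $H_\psi$ to a given extension $\tilde f$ changes its restriction to $V$ by $\eta\circ d|_V$ up to a coboundary, so that the difference cocycle is $(\eta d|_V,0)$; this is exactly the obstruction $\mathcal{O}$ in the statement. Conversely, any homotopy between two extensions restricts on $\mathcal{A}$ to a self-homotopy of $\psi$ realizing the difference in this form.

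The main obstacle I anticipate is the careful verification that $\mathcal{O}$ is well-defined on homotopy classes of self-homotopies and that $(\eta d|_V,0)$ indeed computes the difference of extensions; here the sign bookkeeping in the integration formulas and the derivation-like behavior of $\eta$ are delicate, since $\eta$ is not itself a chain map but satisfies a twisted Leibniz relation. Once these identifications are in place, both exact sequences follow from formal diagram chases combining Proposition~\ref{extExist} with its relative version.
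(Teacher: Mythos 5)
Your proposal is correct and follows essentially the same route the paper intends: the paper gives no detailed proof, remarking only that Proposition~\ref{14.4ext} is a mild extension of \cite[Prop.~14.4]{GrMo} proved ``in essentially the same way'' via Proposition~\ref{extExist} and its relative version \cite[Prop.~10.5]{GrMo}, which is precisely the pair you deduce both sequences from. The details you fill in (the boundary map as the obstruction class of Prop.~\ref{extExist}, exactness at the middle via the relative version applied to a cylinder, and the identification of the indeterminacy in the second sequence with self-homotopies of $\psi$ lifting $\psi\otimes 1$, with the sign and twisted-Leibniz caveats you flag) are consistent with what that reference provides.
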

This is a mild extension of \cite[Prop.~14.4]{GrMo} and is proved in essentially
the same way.

In the case of spaces, a principal fibration $K(\pi,n) \to E \to B$ induces a
fibration $K(\pi,n)^X \to E^X \to B^X$ of spaces of based maps, for any
CW-complex $X$.  The homotopy exact sequence of this fibration is
\begin{multline} \label{ES:spaces}
  \cdots \to H^{n-k}(X;\pi) \to \pi_k(E^X,\tilde f) \to \pi_k(B^X,f)
  \xrightarrow{\iota_k} H^{n-k+1}(X;\pi) \to \cdots \\
  \cdots \to \pi_1(B^X,f) \xrightarrow{\iota_1} H^n(X;\pi) \to
  \left\{\begin{array}{c}\text{homotopy classes}\\\text{of lifts of }f\end{array}
  \right\} \to 0,
\end{multline}
where $\tilde f:X \to E$ is an arbitrary lift of the map $f:X \to B$.

The analogous long exact sequence for DGAs can be proved by an application of
Prop.~\ref{14.4ext}.  Let $\mathcal{A} \otimes {\wedge}V$ be an $n$-dimensional
elementary extension of a minimal DGA $\mathcal{A}$, and let
$\ph:\mathcal{A} \to \mathcal{B}$ be a homomorphism.  Then
there is an exact sequence of groups
\begin{multline} \label{ES:algebras}
  \cdots \to H^{n-k}(\mathcal{B};V^*) \to
  [\mathcal{A} \otimes {\wedge V},\mathcal{B} \otimes \mathbb{F}\langle e^{(k)} \rangle]_{\tilde \ph} \to
  [\mathcal{A},\mathcal{B} \otimes \mathbb{F}\langle e^{(k)} \rangle]_\ph
  \xrightarrow{\iota_k} H^{n-k+1}(\mathcal{B};V^*) \to \cdots \\
  \cdots \to [\mathcal{A},\mathcal{B} \otimes {\wedge e^{(1)}}]_\ph
  \xrightarrow{\iota_1} H^n(\mathcal{B};V^*) \to
  \left\{\begin{array}{c}\text{homotopy classes}\\
  \text{of extensions of }\ph\end{array}\right\} \to 0.
\end{multline}
Here again $\tilde \ph:\mathcal{A} \otimes {\wedge}V \to \mathcal{B}$ is an
arbitrary extension of $\ph$, and $e^{(k)}$ represents a $k$-dimensional
generator with $de=0$ and $e^2=0$.  (Note that $\mathbb{F}\langle e^{(k)} \rangle$
is not minimal when $k$ is even.)

When $B$ is nilpotent, $\mathcal{A}$ is a minimal model for $B$,
$V=\pi \otimes \mathbb{Q}$, and $\mathcal{B}$ is geometric for $X$, there is a
homomorphism between these two sequences; in fact, by induction on elementary
extensions, when $B$ is a rational space, this homomorphism is an isomorphism.
Therefore, for any nilpotent space this homomorphism is the tensor product with
$\mathbb{Q}$, as shown for example by Sullivan as part of the proof of
\cite[Thm.~10.2(i)]{SulLong}.

The group operation on
$[\mathcal{A},\mathcal{B} \otimes \mathbb{F}\langle e^{(k)} \rangle]_\ph$ is given
as follows.  We can represent any element as $F=\ph+\eta \otimes e$, where
$\eta:\mathcal{A}^* \to \mathcal{B}^{*-k}$ satisfies the identities $d\eta=\eta d$
and
\begin{equation} \label{Leibniz}
  \eta(uv)=(-1)^{\deg v}\eta(u)\ph(v)+\ph(u)\eta(v).
\end{equation}
Then we define the operation $\boxplus$ on such elements by the formula
$$(\ph+\eta \otimes e)\boxplus(\ph+\zeta \otimes e)=\ph+(\eta+\zeta) \otimes e.$$
When we view $e$ as the volume element on $S^k$, this operation is homotopic to
the image of the usual operation in $\pi_k$ by an Eckmann--Hilton argument.  We
can then identify $\iota_k$ with
$$\ph+\eta \otimes e \mapsto \eta \circ d|_V:V \to \mathcal{B}^{n-k+1}.$$

\section{Torsion and density growth}

The finite-to-one-ness statement mentioned in the introduction was proved by
Sullivan as part of the following more general result (see
\cite[Theorem 10.2(i)]{SulLong} and its proof).
\begin{thm*}
  Let $X$ be a finite complex and $Y$ a nilpotent space of finite type (over the
  integers).  Then
  \begin{enumerate}
  \item the localization map $\rat:[X,Y] \to [X,Y_{(0)}]$ is finite-to-one;
  \item and for all $i>0$ and $f:X \to Y$, the map
    $$\pi_i(Y^X,f) \otimes \mathbb{Q} \to \pi_i((Y_{(0)})^X,f_{(0)})$$
    induced by localization is an isomorphism.
  \end{enumerate}
\end{thm*}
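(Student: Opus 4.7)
The plan is to prove both parts by simultaneous induction on the Postnikov tower of $Y$. Since $X$ is a finite complex of some dimension $d$, mapping spaces from $X$ only depend on finitely many Postnikov stages, so one reduces to $Y = Y_{(N)}$ for some $N$. Because $Y$ is nilpotent, each stage $Y_{(n-1)} \to Y_{(n)}$ further refines into a finite tower of principal fibrations with Eilenberg--MacLane fibers $K(A, n)$, where $A$ is a finitely generated abelian subquotient of $\pi_n(Y)$. Thus the induction reduces to comparing $[X, E]$ with $[X, B]$ and $\pi_*(E^X, \tilde f)$ with $\pi_*(B^X, f)$ for a single principal fibration $K(\pi, n) \to E \to B$, the base case ($Y$ a point) being trivial.

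For such a fibration, the long exact sequence (\ref{ES:spaces}) relates $\pi_*(E^X, \tilde f)$, $\pi_*(B^X, f)$, and $H^{*}(X; \pi)$. Rationalization gives the analogous sequence with fiber $K(\pi \otimes \mathbb{Q}, n)$, and $Y \to Y_{(0)}$ induces a morphism between them. Since $X$ is finite, $H^k(X; \pi)$ is finitely generated, and by the universal coefficient theorem together with flatness of $\mathbb{Q}$ the natural map $H^k(X; \pi) \otimes \mathbb{Q} \to H^k(X; \pi \otimes \mathbb{Q})$ is an isomorphism.

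For part (2), I would tensor the long exact sequence of groups with $\mathbb{Q}$; since $\otimes \mathbb{Q}$ is exact, the resulting sequence is still exact. The outer $H^{n-i}(X;\pi) \otimes \mathbb{Q}$ terms match their rational counterparts, and the inductive hypothesis supplies the isomorphism for $\pi_i(B^X, f) \otimes \mathbb{Q} \to \pi_i(B_{(0)}^X, f_{(0)})$. The five-lemma then delivers the isomorphism for the total space, as required. For part (1), I examine the tail: given $f \in [X, B]$, the set of homotopy classes of lifts of $f$ to $[X, E]$ is $H^n(X; \pi)/\img(\iota_1)$, a finitely generated abelian group. Its rational analogue is $(H^n(X;\pi) \otimes \mathbb{Q})/\img(\iota_1^{\mathbb{Q}})$, and the comparison map is the canonical map from a finitely generated abelian group to its rationalization (this is where part (2) with $i = 1$ enters, to ensure that the image of $\iota_1$ rationalizes to the image of $\iota_1^{\mathbb{Q}}$). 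Its kernel is the torsion subgroup, hence finite. Combined with the inductive hypothesis that $[X, B] \to [X, B_{(0)}]$ is finite-to-one, this bounds the fiber of $[X, E] \to [X, E_{(0)}]$ over any class as a finite union of finite torsion sets.

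The main obstacle is the identification of the comparison between the integral and rational exact sequences as the functor $\otimes \mathbb{Q}$. This fact, noted in the excerpt after (\ref{ES:algebras}), is the arithmetic heart of rational homotopy theory and relies on the compatibility of the topological and DGA obstruction theories; rather than reprove it we would invoke it. A secondary subtlety is that part (1) at each inductive stage uses part (2) with $i=1$, so the two claims must genuinely be bundled into a single induction rather than proved separately.
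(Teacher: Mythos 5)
Your argument reproduces the inductive scheme underlying Sullivan's proof (which the paper cites rather than re-proves) and, equally, the paper's own quantitative strengthening in Lemma~\ref{injSur}: refine the Postnikov tower of $Y$ into principal $K(\pi,n)$-fibrations, compare the resulting long exact sequence~\eqref{ES:spaces} of mapping spaces with its rational counterpart, and deduce both conclusions from a five-lemma/torsion argument in which part~(2) at $i=1$ feeds into part~(1). The one point you gloss is that $\pi_1(Y^X,f)$ is in general only finitely generated nilpotent rather than abelian, so ``tensoring the exact sequence with $\mathbb{Q}$'' must be read as Malcev rationalization of nilpotent groups (which does preserve exactness); the paper is careful about this in its quantitative four lemma, where it allows the groups $A_i$ to be non-abelian, and you would need the analogous care here.
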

One might hope that the finiteness in (1) is \emph{uniform}, that is, that
cardinalities of preimages of points are bounded by some constant $N(X,Y)$.
Indeed this is obviously true when $X=S^n$, since in that case the
correspondence is a group homomorphism and each such preimage is a coset of the
kernel.  In general, however, the size of this preimage may grow without bound
depending on the rational homotopy class; we then say that $[X,Y]$ exhibits
\emph{torsion growth}.  Instead, the quantitative version of Sullivan's theorem
is provided by Theorem \ref{torGrowth}, which implies Theorems \ref{thm:tg} and
\ref{thm:g}.

In this section, we provide three examples of torsion growth and related
phenomena that motivate the rest of the discussion.
\begin{figure}
  \centering
  \begin{tikzpicture}
    \useasboundingbox (-2,-2.5) rectangle (7.6,1);
    \draw (1,1) .. controls (-2,.5) and (-.5,-1.5) .. (1,1);
    \node(S4) at (.1,.4){$S^3$};
    \draw (1,1) .. controls (.8,-2.5) and (4.5,1.2) .. (1,1);
    \node(S4) at (1.8,.3){$S^4$};
    \draw (-.65,-.18) .. controls (0,-.7) and (-.1,-1) .. (-.7,-.8);
    \draw (-.7,-.8) .. controls (-3.5,.4) and (-1.6,-3.5) .. (-.6,-1.5);
    \draw (1.8,-.5) .. controls (.2,-.8) and (0,-.8) .. (-.6,-1.5);
    \draw[->] (2,1) .. controls (2.5,1.2) and (3,1.2) .. (3.7,.8);
    \draw[->] (-.7,-1.8) .. controls (1,-2.1) and (2,-1.9) .. (3.1,-1.6);
    \draw (5,-.5) .. controls (5.2,3) and (1.5,-.5) .. (5,-.5);
    \node(S4) at (4.2,.3){$S^4$};
    \draw (5,-.5) .. controls (4.6,-4) and (1,-.5) .. (5,-.5);
    \node(S4) at (4,-1.3){$S^7$};
    \draw[->] (4.9,.8) .. controls (5.5,1) .. (6.15,.65)
      node[pos=0.5, anchor=south]{degree $p$};
    \draw[->] (4.7,-1.7) .. controls (5.1,-2) .. (6.15,-.65)
      node[pos=0.7, anchor=north west, align=center]{Hopf\\invariant $q$};
    \draw (6.8,0) circle [radius=.8] node{$S^4$};
  \end{tikzpicture}
  
  \caption{
    Construct maps $S^3 \times S^4 \to S^4$ by ``budding off'' a small ball and
    then projecting the rest onto the $S^4$ factor.
  } \label{fig:s3s4}
\end{figure}
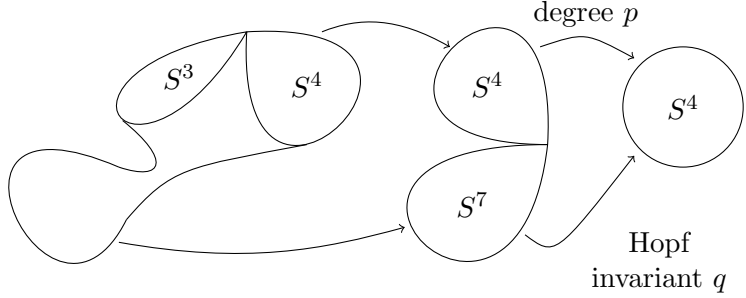
\begin{ex} \label{ex:3x4}
  By obstruction theory, the homotopy class of a map $f:S^3 \times S^4 \to S^4$
  is determined by the degree on the $S^4$ factor and a Hopf invariant on the
  top-dimensional cell.  However, some combinations determine homotopic maps.
  To see this, we fix two maps $f_1,f_2:S^3 \times S^4 \to S^4$ with degree $d$
  and Hopf invariants $h_1,h_2$ which factor through a map
  $S^3 \times S^4 \to S^7 \vee S^4$ given by pinching off a disk in the top cell
  and projecting the rest onto the $S^4$ factor, as in Figure \ref{fig:s3s4}.

  Suppose $H:S^3 \times S^4 \times I \to S^4$ is a homotopy between two such
  maps.  The original maps factor through $S^7 \vee S^4$, so such a homotopy
  factors as
  $$S^3 \times S^4 \times I \xrightarrow{H_1} U \xrightarrow{H_2} S^4$$
  where $U$ is given by collapsing each end of the cylinder to a copy of
  $S^7 \vee S^4$.  This $U$ is homotopy equivalent to $S^4 \times S^4$ minus two
  open disks; here the first $S^4$ factor is $S^3 \times I$ modulo the ends.
  Then $H_2$ sends the boundaries of the two disks to $S^4$ via maps of Hopf
  invariant $h_1$ and $h_2$, and the second $S^4$ factor via a map of degree $d$.
  It remains to determine the degree on the first $S^4$ factor, which we call
  $a$. In order for the map to be defined on the top cell, the Hopf invariant on
  its boundary must be 0, that is
  $$2ad=h_1-h_2.$$
  Thus $a$ is determined by $d$, $h_1$, and $h_2$, and such a homotopy can be
  constructed if and only if $h_1-h_2$ is an integer multiple of $2d$.  In other
  words, maps $S^3 \times S^4 \to S^4$ which have degree $d$ on the $S^4$ have a
  nontrivial Hopf invariant modulo $2d$.  For $d \neq 0$, this gives $2d$
  elements of $[X,Y]$ above a single element of $[X,Y_{(0)}]$.
\end{ex}
Repeating this example with maps $S^3 \times \mathbb{H}\mathbf{P}^n \to
\mathbb{H}\mathbf{P}^n$ gives $(n+1)d^n$ different maps of degree $d$ on the
second factor.  Thus the torsion growth of $[X,Y]$ may be an arbitrarily large
polynomial in the rational homotopy invariants.  The second part of Theorem
\ref{torGrowth} shows that this is the worst that can happen.

Notice that torsion growth occurs when the obstruction theory is affected by what
happens in lower dimensions.  A similar situation may lead to another kind of
growth which is also limited by Theorem \ref{torGrowth}.  Namely, the ``density''
of rational homotopy classes which come from genuine, integral homotopy classes
in $[X,Y]$ (\emph{integral classes} for short) may grow as we look at larger
balls in $\Hom(\mathcal{M}_Y^*,\mathcal{M}_X^*)$.
\begin{ex}
  Consider now the space $X=S^3 \times (S^4 \vee S^4)$.  Similarly to the
  previous example, elements of $[X,S^4]$ are determined by degrees $\alpha_1$
  and $\alpha_2$ on the two copies of $S^4$ and Hopf invariants $\beta_1$ and
  $\beta_2$ on the two $7$-cells.  We now translate this into rational homotopy
  theory.  The two spaces have minimal models
  $\mathcal{M}_{S^4}^*=\langle a^{(4)},b^{(7)} \mid da=0, db=a^2 \rangle$ and
  $$\mathcal{M}_X^*=\left\langle x^{(3)},y_1^{(4)},y_2^{(4)},z_{11}^{(7)},z_{12}^{(7)},
  z_{22}^{(7)}, \ldots \mid dz_{ij}=y_iy_j, \ldots \right\rangle$$
  (omitting higher-degree terms) and homomorphisms are given by
  \begin{align*}
    a &\mapsto \alpha_1y_1+\alpha_2y_2 \\
    b &\mapsto \alpha_1^2z_{11}+2\alpha_1\alpha_2z_{12}+\alpha_2^2z_{22}
    +\beta_1xy_1+\beta_2xy_2
  \end{align*}
  for any $\alpha_i,\beta_i \in \mathbb{Q}$.  However, when
  $\alpha_1\beta_2=\alpha_2\beta_1$, a homotopy to the homomorphism with
  $\beta_1=\beta_2=0$ is given by
  \begin{align*}
    a &\mapsto \alpha_1y_1+\alpha_2y_2+\frac{\beta_1}{\alpha_1}x \otimes dt \\
    b &\mapsto \alpha_1^2z_{11}+2\alpha_1\alpha_2z_{12}+\alpha_2^2z_{12}
    +(\beta_1y_1+\beta_2y_2)x \otimes t.
  \end{align*}
  Extending by linearity, we see that the representatives of a given rational
  homotopy class form a line of slope $\alpha_2/\alpha_1$ in the
  $(\beta_1,\beta_2)$-plane.  (Thus the space of homotopy classes in this plane
  is one-dimensional.)  The lines which pass through lattice points are integral.
  Thus when $\alpha_1$ and $\alpha_2$ are relatively prime, a ball of radius $R$
  in this plane contains $2\max\{\alpha_1,\alpha_2\}R$ integral
  classes.\footnote{By repeating the analysis in the previous example, one can
    see that the size of the preimage of this ball in $[X,Y]$, without
    identifying classes which are the same rationally, is
    $2\max\{\alpha_1,\alpha_2\}R \pm \gcd\{\alpha_1,\alpha_2\}$, regardless of
    what the gcd is.}  As we allow $\alpha_1$ and $\alpha_2$ to increase, this
  density grows, and the total number of integral classes in an $R$-ball in
  $\Hom(\mathcal{M}^*_Y,\mathcal{M}_X^*)$ is $\sim R^4$, rather than $\sim R^3$ as
  one may expect purely by looking at the dimension of the space of rational
  homotopy classes.\footnote{This relies on the fact that a positive fraction
    (namely, $6/\pi^2$) of all pairs of numbers are relatively prime.}
\end{ex}
Thus, Theorem \ref{torGrowth} may be rephrased as saying that both torsion growth
and density growth of $[X,Y]$ are always at worst polynomial.

Finally, we compute an example in which only looking at the volume growth of the
space of DGA homomorphisms actually yields the wrong overall bound on the growth
of $[X,Y]$.  This is in contrast with the previous two examples, where the number
of distinct maps of degree zero with at most a given Lipschitz constant, which is
determined by the Hopf invariant, swamps the ``extra'' elements coming from the
torsion and density growth.  Indeed, in this example, the correct bound is not
even polynomial!
\begin{ex}
  Let $X=(S^3 \times S^4)\#(S^3 \times S^4)$.  Using the same method as in
  Example \ref{ex:3x4}, we see the following:
  \begin{itemize}
  \item The homotopy class of a map $X \to S^4$ is determined by degrees $d_1$
    and $d_2$ on the two $S^4$ factors and a Hopf invariant $h$ on the 7-cell.
  \item The invariant $h$ is well-defined modulo $2\gcd(d_1,d_2)$.
  \end{itemize}
  We now estimate the number of homotopy classes which have representatives with
  Lipschitz constant at most $L$.  By a minimal model analysis, such a homotopy
  class must have $d_i=O(L^4)$ and (if $d_1=d_2=0$) $h=O(L^8)$.  Conversely, any
  homotopy class with $d_i \leq L^4$ and $h \leq L^8$ can be realized with
  Lipschitz constant $O(L)$ by the construction in Example \ref{ex:3x4}.  The
  number of such homotopy classes is
  $$2L^8+4\sum_{0<d<L^4} 2d+\sum_{0<|d_1|,|d_2|<L^4} 2\gcd(d_1,d_2).$$
  Clearly, the last term is asymptotically at least as large as the other two.
  Now, if $N \geq k$, then the proportion of pairs $0<a,b \leq N$ with
  $\gcd(a,b)=k$ is
  \begin{itemize}
  \item at most that of pairs for which $k$ divides both $a$ and $b$ (i.e.
    $1/k^2$);
  \item and at least
    $$\frac{1}{4}\left(\frac{1}{k^2}-\sum_{\ell=2}^\infty \frac{1}{(k\ell)^2}\right)
    =\frac{1}{4k^2}\left(2-\frac{\pi^2}{6}\right).$$
    Here the factor of $1/4$ comes from accommodating the possibility that $k$
    doesn't evenly divide $N$, and the summation from an overcount of all the
    pairs which have gcd divisible by and strictly larger than $k$.
  \end{itemize}
  Therefore, to within a multiplicative constant, the number of integral homotopy
  classes with these bounds is
  $$\sum_{0<|d_1|,|d_2|<L^4} 2\gcd(d_1,d_2) \sim \sum_{k=1}^{L^4} \frac{L^8}{k^2} \cdot
  2k \sim L^8\log L.$$
  This is therefore the growth function of $[X,S^4]$.
\end{ex}

\section{Polynomially bounded functionals on $[X,Y]$}

We now introduce some vocabulary to talk about the quantitative properties of the
fundamental correspondences in rational homotopy theory.

Let $X$ be a finite simplicial complex and $Y$ a nilpotent space of finite
$\mathbb{Q}$-homological type.  As mentioned previously, homotopy classes of maps
$X \to Y_{(0)}$ are in bijection with homotopy classes of DGA homomorphisms
$\mathcal{M}_Y^* \to A^*X$ from the minimal model of $Y$ to the simplexwise
polynomial forms on $X$.\footnote{This is Theorem 10.1(iii) in \cite{SulLong};
  it follows from the adjunction between the PL de Rham and spatial realization
  functors discussed in \cite[\S8]{BoGu}.}  So while the domain and range are
both potentially infinite-dimensional as vector spaces, there is a
finite-dimensional vector subspace of
$\Hom_{\mathbb{Q}\text{-v.s.}}(\mathcal{M}_Y^*,A^*X)$ which contains representatives
of every homotopy class, as a consequence of the following lemma.
\begin{lem} \label{lem:WExist}
  There is a finite-dimensional space $W \subset A^*X$ such that every homotopy
  class of maps $\mathcal{M}_Y^* \to A^*X$ has a representative where the images
  of all the indecomposables are in $W$.
\end{lem}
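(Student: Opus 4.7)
The plan is to proceed by induction up the Postnikov tower of $\mathcal{M}_Y^*$, building $W$ one elementary extension at a time. Two basic observations drive the argument. First, since $X$ is a finite simplicial complex of some dimension $d$, the polynomial form algebra $A^*X$ vanishes in degrees $>d$, so every indecomposable of degree $>d$ is automatically forced to map to zero; only the finitely many indecomposables of degree $\le d$ require attention. Second, once the images of the lower-degree indecomposables have been restricted to lie in a finite-dimensional subspace of $A^*X$, the values $\ph(dv)$ for $v$ of the next degree are constrained to lie in a corresponding finite-dimensional subspace spanned by appropriate monomials.

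Concretely, I would prove by induction on $n$ that there is a finite-dimensional $W_{\le n}\subseteq A^*X$ such that every DGA homomorphism $\ph\colon\mathcal{M}_Y^*\to A^*X$ is homotopic to one whose values on indecomposables of degree $\le n$ all lie in $W_{\le n}$. For the inductive step, assume $\ph(V_{<n})\subseteq W_{\le n-1}$. Then for each $v\in V_n$ the form $\ph(dv)$ lies in the finite-dimensional subspace $U_n\subseteq A^{n+1}X$ spanned by monomials of total degree $n+1$ in $W_{\le n-1}$. Fix once and for all a linear section $s\colon U_n\cap\img(d)\to A^nX$ of the differential and cocycle representatives $\omega_1,\dots,\omega_{b_n}$ of a basis of $H^n(X;\mathbb{Q})$, and set $W_n:=s(U_n\cap\img d)+\mathbb{Q}\langle\omega_1,\dots,\omega_{b_n}\rangle$. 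For each $v\in V_n$ the form $\ph(v)-s(\ph(dv))$ is closed; writing its cohomology class as $\sum_j c_{v,j}[\omega_j]$ and choosing a primitive $\eta_v$ for $\ph(v)-s(\ph(dv))-\sum_j c_{v,j}\omega_j$, the redefinition $\ph'(v):=s(\ph(dv))+\sum_j c_{v,j}\omega_j\in W_n$ differs from $\ph(v)$ by an exact form, and the formula $H(v)=(1-t)\ph(v)+t\ph'(v)+\eta_v\otimes dt$ (with the constant homotopy on lower indecomposables) defines a DGA homotopy $\ph|_{\mathcal{M}_Y^*(n)}\simeq\ph'|_{\mathcal{M}_Y^*(n)}$.

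The step I expect to require the most care is extending the revised map $\ph'$ and the homotopy $H$ up the rest of the tower so that $\ph\simeq\ph'$ holds on all of $\mathcal{M}_Y^*$. For this I would apply Proposition \ref{extExist} inductively to each subsequent elementary extension with $\mathcal{B}=\mathcal{C}=A^*X$ and $h=\id$: since $h$ is the identity, the relative cohomology groups $H^*(h;V^*)$ in which the obstructions live are all zero, so the extension exists at every stage and the formulas of the proposition furnish explicit choices. Taking $W:=W_{\le d}$ gives a finite-dimensional subspace of $A^*X$ containing a representative for every indecomposable image (those of degree $>d$ being zero automatically), completing the proof.
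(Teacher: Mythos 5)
Your argument follows essentially the same strategy as the paper's: induct up the minimal model, at each stage defining the new piece of $W$ by a linear section of $d$ over the finite-dimensional span of all possible $\ph(dv)$'s together with chosen cocycle representatives for $H^*(X;\mathbb{Q})$, and using the torsor of extensions over $H^n(X;V_n^*)$ to move the degree-$n$ images into that space. There is, however, one genuine issue: the lemma is stated for nilpotent $Y$, and for a merely nilpotent minimal model the elementary extensions need not come in strictly increasing degree, so $dV_n$ can involve other indecomposables of the \emph{same} degree $n$ (for instance $dy_2=xy_1$ with $x$ of degree $1$ and $y_1,y_2$ both of degree $2$). Your induction on cohomological degree then breaks, because the replacement $\ph'(v)=s(\ph(dv))+\sum_j c_{v,j}\omega_j$ applied simultaneously to all of $V_n$ need not be a chain map: if $dv$ contains some $v'\in V_n$ that you have also modified, then $d\ph'(v)=\ph(dv)\neq\ph'(dv)$. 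The paper sidesteps this by inducting on elementary extensions $\mathcal{M}^*_Y(k)\subset\mathcal{M}^*_Y(k+1)$ rather than on degree, treating one stage at a time even when consecutive stages share a degree; your argument works verbatim for simply connected $Y$ and is repaired by that reindexing. A smaller, purely stylistic difference: the paper's inductive hypothesis concerns homotopy classes of maps out of the truncation $\mathcal{M}^*_Y(k)$ alone, which makes your extension-of-$\ph'$-and-$H$-up-the-rest-of-the-tower step (which is correct as you set it up, via Proposition~\ref{extExist} with $h=\id$) unnecessary.
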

\begin{proof}
  We show this by induction on elementary extensions.  In the base case, there is
  one DGA map $\mathbb{Q} \to A^*X$, whose image is in $\mathbb{Q}$.  Now suppose
  that every homotopy class of maps $\mathcal{M}_Y^*(k) \to A^*X$ has a
  representative such that the indecomposables land in a finite-dimensional
  subspace $W_k$.  Suppose
  $\mathcal{M}_Y^*(k+1)=\mathcal{M}_Y^*(k) \otimes {\wedge V}$, where $V$ is of
  degree $n_{k+1}$.  Then for any such representative, $dV$ lands in the
  finite-dimensional subspace $D_{k+1} \subset \mathbb{Q}[W_k]$ consisting of
  $(n_{k+1}-1)$-coboundaries.  Let $S_{k+1}$ be a finite-dimensional subspace of
  $A^*X$ such that $d|_{S_{k+1}}$ is an isomorphism to $D_{k+1}$.  Finally, let
  $\mathcal{H}^{n_{k+1}}(X;\mathbb{Q})$ be a subspace of $A^*X$ containing a
  representative for each element of $H^{n_{k+1}}(X;\mathbb{Q})$.  By obstruction
  theory, every homotopy class of DGA maps $\mathcal{M}^*_Y(k+1) \to A^*X$ has a
  representative in
  $$W_{k+1}:=W_k+S_{k+1}+\mathcal{H}^{n_{k+1}}(X;\mathbb{Q}).$$
  Then we can set $W=W_r$, where $r$ is maximal such that $n_r \leq \dim X$.
\end{proof}
Note that given a minimal model $m_X:\mathcal{M}_X^* \to A^*X$ for $X$, we can
always choose $W \subset A^*X$ as in the proof to be a subset of the image of
$m_X$.  This is true even if $X$ is not nilpotent: in this case a minimal model
can still be chosen for $A^*X$, although it may be infinite-dimensional in each
degree and may not be a good homotopical model for $X$.  We fix the notation
$W(m_X,Y)$ for the vector space constructed in this way, as it will be useful
later.  We also write $\mathbb{Q}[W]$ for the subalgebra of $A^*X$ generated by
$W$.  This is still finite-dimensional in each degree and zero in degrees
$>\dim X$, allowing us to use $\Hom(\mathcal{M}_Y^*,\mathbb{Q}[W])$ as a larger
but still finite-dimensional substitute for the set of homomorphisms which send
indecomposables of $\mathcal{M}_Y^*$ to $W$.

Now let $F:[X,Y] \to \mathbb{R}$ be a functional\footnote{In the old-fashioned
 sense of a mapping assigning numerical values to elements of a function space.}.
We say that $F$ is \emph{polynomially bounded with respect to} a $W$ as above if
for some (equivalently, any) choice of norms on $W$ and
$\mathcal{M}_Y^{\leq \dim X}$, there is some $p$ such that for every
$\alpha \in [X,Y]$
\begin{equation} \label{bdAbove}
\lvert F(\alpha) \rvert=O\bigl(\left(\min\{\lVert\ph\rVert_{\text{op}} : \ph \in
\Hom(\mathcal{M}_Y^*,\mathbb{Q}[W])
\text{ with }[\ph]=\alpha_{(0)}\}\right)^p\bigr).
\end{equation}
Here $\lVert\cdot\rVert_{\text{op}}$ represents the operator norm and $\alpha_{(0)}$
is the image of $\alpha$ in $[\mathcal{M}_Y^*,A^*(X)]$.  We say that $F$ is
\emph{polynomially bounded} if it is polynomially bounded with respect to all
choices of $W$.

Likewise, if for some (not necessarily every!) $W$ the reverse inequality holds,
i.e.~for some $p>0$
\begin{equation} \label{bdBelow}
F(\alpha)=\Omega\bigl(\left(\min\{\lVert\ph\rVert_{\text{op}}: \ph \in
\Hom(\mathcal{M}_Y^*,\mathbb{Q}[W])
\text{ with }[\ph]=\alpha_{(0)}\}\right)^p\bigr),
\end{equation}
we say that the functional $F$ is polynomially bounded below.

The degree $p$ in \eqref{bdAbove} may certainly depend on the choice of $W$.  For
example, one may take $X=S^3$, $Y=S^2$, and $F$ to simply be the absolute value
of the Hopf invariant.  Here we have
$\mathcal{M}_X^*=\langle x^{(3)} | dx=0 \rangle$ and
$$\mathcal{M}_Y^*=\langle a^{(2)}, b^{(3)} | da=0, db=a^2 \rangle.$$
The obstruction-theoretic choice of $W$ as in Lemma \ref{lem:WExist} is a
purely 3-dimensional one generated by a volume form on $S^3$; the map
$a \mapsto 0,b \mapsto kd\vol_{S^3}$ is homotopic to the pullback of a map of Hopf
invariant $k$, so the bound for this choice is linear.  But we could also, for
example, choose $W=\langle f^*d\vol_{S^2}, d\vol_{S^3} \rangle$, where
$f:S^3 \to S^2$ is the Hopf fibration.  Then a map of Hopf invariant $k^2$ can be
represented by $a \mapsto kf^*d\vol_{S^2}$, $b \mapsto 0$,\footnote{Note that we
  still need to include $d\vol_{S^3}$ in $W$ in order to represent maps with Hopf
  invariant not a perfect square.} and so the polynomial bound on $F$ is only
quadratic.

It is probably the case, although we do not know of specific examples, that
functionals may be polynomially bounded with respect to some $W$ without being
polynomially bounded.\footnote{This is because there are ways to define $W$ so
  that the images of indecomposables must be related via essentially arbitrary
  systems of rational diophantine equations, and very little is known about how
  the minimal size of solutions to these depends on parameters.}  On the other
hand, it is always enough to test the specific $W$ we have already constructed.
\begin{lem} \label{lem:WBest}
  Fix a minimal model $m_X:\mathcal{M}_X^* \to A^*X$.  Whenever a functional $F$
  on $[X,Y]$ is polynomially bounded with respect to $W(m_X,Y)$, it is
  polynomially bounded.  Similarly, $F$ is polynomially bounded below if and only
  if \eqref{bdBelow} holds for $W(m_X,Y)$.
\end{lem}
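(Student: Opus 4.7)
The plan is to reduce both halves of the lemma to a single quantitative refinement of Lemma \ref{lem:WExist}, which I would then establish by imitating its inductive proof while tracking norms through the explicit formula in Proposition \ref{extExist}.

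First I would reduce to the following quantitative claim: for any finite-dimensional $W \subset A^*X$ and any $\varphi' \in \Hom(\mathcal{M}_Y^*, \mathbb{Q}[W])$, there is a homotopic $\varphi \in \Hom(\mathcal{M}_Y^*, \mathbb{Q}[W(m_X, Y)])$ with $\|\varphi\|_{\text{op}} \leq C(1 + \|\varphi'\|_{\text{op}})^q$, where $C$ and $q$ depend only on $W$, $m_X$, and $Y$. Taking minima over representatives of a given rational class, this inequality turns the hypothesis \eqref{bdAbove} for $W(m_X, Y)$ into \eqref{bdAbove} for any $W$ (with a worse exponent), and dually it turns \eqref{bdBelow} for some $W$ into \eqref{bdBelow} for $W(m_X, Y)$; the other directions of both biconditionals are immediate from the definitions.

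For the quantitative claim I would induct on elementary extensions of $\mathcal{M}_Y^*$. At level $k$, suppose that I have built $\varphi_k : \mathcal{M}_Y^*(k) \to \mathbb{Q}[W_k(m_X, Y)]$ together with a homotopy $H_k : \mathcal{M}_Y^*(k) \to A^*X \otimes {\wedge}(t, dt)$ relating the inclusion of $\varphi_k$ into $A^*X$ to $\varphi'|_{\mathcal{M}_Y^*(k)}$, with both norms polynomially bounded in $\|\varphi'\|_{\text{op}}$. To extend, I apply Proposition \ref{extExist} with $\mathcal{B} = \mathbb{Q}[W(m_X, Y)] \hookrightarrow A^*X = \mathcal{C}$, $f = \varphi_k$, $g = \varphi'|_{\mathcal{M}_Y^*(k+1)}$, and $H = H_k$. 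The obstruction vanishes tautologically since $\varphi'$ itself gives an extension into $A^*X$; the content is to pick the extension so that it lands in $W_{k+1}(m_X, Y) = W_k + S_{k+1} + \mathcal{H}^{n_{k+1}}$. For each indecomposable $v$, I set $\varphi_{k+1}(v) = s(v) + h(v)$, where $s(v) = (d|_{S_{k+1}})^{-1}\varphi_k(dv) \in S_{k+1}$ and $h(v) \in \mathcal{H}^{n_{k+1}}$ is the unique cohomology representative making the second half of the obstruction equation solvable. The explicit formula from Proposition \ref{extExist} then produces the extended homotopy $H_{k+1}$. Each ingredient---the bounded inverse $(d|_{S_{k+1}})^{-1}$, the bounded section onto $\mathcal{H}^{n_{k+1}}$, the operator $\int_0^1$, and the DGA multiplication---is a linear map with a fixed operator norm, so the norms of $\varphi_{k+1}$ and $H_{k+1}$ remain polynomially controlled by $\|\varphi'\|_{\text{op}}$.

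The main obstacle is that $A^*X$ is infinite-dimensional, so the intermediate norms need a common finite-dimensional container. I would handle this by fixing at the outset a finite-dimensional sub-DGA $\mathcal{D} \subset A^*X$ containing $\mathbb{Q}[W]$, $\mathbb{Q}[W(m_X, Y)]$, and all of the finitely many primitives and cohomology representatives used in the induction; the explicit formulas from Proposition \ref{extExist} keep the homotopies polynomial in $t$ of bounded degree, so they live in $\mathcal{D} \otimes \mathbb{Q}[t, dt]_{\leq N}$ for some fixed $N$, which is again finite-dimensional. Since all norms on a fixed finite-dimensional space are equivalent, this provides the common yardstick in which the polynomial bound makes sense and is preserved by each bounded operation above.
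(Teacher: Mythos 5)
Your proposal is correct and follows essentially the same route as the paper: reduce to a polynomial "transfer" bound between representatives over $W'$ and over $W(m_X,Y)$, then prove that bound by inducting over elementary extensions of $\mathcal{M}_Y^*$, choosing $b(v)$ as the $S_{k+1}$-primitive of $\varphi_k(dv)$ corrected by a harmonic representative and solving $dc(v) = b(v)-\varphi'(v)-\int_0^1 H_k(dv)$ in a fixed finite-dimensional complement, exactly as in the discussion after Proposition \ref{extExist}. The only cosmetic difference is that you package the finite-dimensionality bookkeeping into a single sub-DGA $\mathcal{D}$ fixed upfront rather than tracking the spaces $U_k$ stage-by-stage as the paper does, but since these depend only on $W$, $W(m_X,Y)$, and $\mathcal{M}_Y^*$ (not on $\varphi'$) the two framings are interchangeable.
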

When $X$ is also nilpotent, we may also choose
$\mathbb{Q}[W]=m_X(\mathcal{M}_X^{\leq \dim X})$.  In this case, it's possible to
define the degree of polynomiality of a functional $F$ as its degree with respect
to this $W$.  This may be different from the degree with respect to the Lipschitz
norm given by the minimal Lipschitz constant of a representative; in this paper
we will take the latter as more natural, as explained in the introduction.
\begin{proof}[Proof of Lemma \ref{lem:WBest}.]
  Write $W=W(m_X,Y)$.  For every $k$, fix subspaces $S_k$ and
  $\mathcal{H}^{n_k}(X;\mathbb{Q})$ of $W$ as constructed in the proof of Lemma
  \ref{lem:WExist}.

  Let $W^\prime$ be another finite-dimensional subset of $A^*X$ such that maps to
  $W^\prime$ contain representatives for all of $[X,Y]$.  It is enough to show
  that there is a polynomial $P(t)$ such that for every
  $\ph^\prime:\mathcal{M}^*_Y \to \mathbb{Q}[W^\prime]$, there is a
  $\ph:\mathcal{M}^*_Y \to \mathbb{Q}[W]$ which is homotopic to $\ph^\prime$ as a
  map to $A^*X$ such that $\lVert\ph\rVert_{\mathrm{op}} \leq
  P(\lVert\ph^\prime\rVert_{\mathrm{op}})$.  Then if $F$ is bounded with respect to
  $W$ by a polynomial $P_0$, then it is bounded with respect to $W^\prime$ by
  $P \circ P_0$.

  We show this by induction on elementary extensions.  The point is to move
  $\ph^\prime|_{\mathcal{M}_Y^*(k)}$ to a $\ph_k:\mathcal{M}_Y^*(k) \to \mathbb{Q}[W]$
  which is sufficiently nearby, in the sense that there is a polynomial-size
  homotopy from one to the other.  This allows us to lift $\ph_k$ to a $\ph_{k+1}$
  which is still not too far away.

  Formally, we keep track of the operator norm of $\ph_k$ (which sends
  indecomposables to $W(m_X,Y)$) and of the homotopy $H_k$ between
  $\ph^\prime|_{\mathcal{M}_Y^*(k)}$ and $\ph_k$ (which sends indecomposables to
  $U_k \otimes \langle t^{\leq k},dt \rangle$ for some finite-dimensional $U_k$
  which depends only on $W$ and $W'$).  At the $(k+1)$st step, lifting the
  homotopy increases these operator norms by at most a polynomial, again
  depending only on $W$ and $W'$.


  Specifically, we define $\ph_{k+1}$ and $H_{k+1}$ as follows.  Write
  $\mathcal{M}^*_Y(k+1)=\mathcal{M}^*_Y(k)\otimes{\wedge V}$; choose a finite
  subspace $S^\prime \subset A^*X$ such that $d|_{S^\prime}$ is an isomorphism to the
  space of coboundaries in the subspace
  $$S_{k+1}+\mathcal{H}^{n_{k+1}}(X;\mathbb{Q})+(U_k+W^\prime)^{n_{k+1}} \subset A^*X.$$
  By the discussion after Proposition \ref{extExist}, to extend $\ph_k$ and $H_k$
  it is enough to choose $(b,c)$ satisfying
  \begin{align*}
    db(v) &= \ph_k(dv) \\
    dc(v) &= b(v)-\ph^\prime(v)-{\textstyle\int_0^1 H_k(dv)}
  \end{align*}
  for $v \in V$.  Then we can set $\ph_{k+1}(v)=b(v)$ and
  $$H_{k+1}(v)=\ph_{k+1}(v)+{\textstyle\int_0^t H_k(dv)}+d(c(v) \otimes t).$$
  To choose $b$ and $c$ in a polynomially bounded way, first let
  $\tilde b(v)=(d|_{S_{k+1}})^{-1}(\ph_k(dv)) \in S_{k+1}$.  Then
  $$\tilde b(v)-\ph^\prime(v)-{\textstyle\int_0^1 H_k(dv)}$$
  is a cycle in $S_{k+1}+U_k+W^\prime$; let $a(v)$ be the representative of its
  homology class in $\mathcal{H}^{n_{k+1}}(X;\mathbb{Q})$.  Then we choose
  $b(v)=\tilde b(v)-a(v)$ and $c(v)$ to be the antiderivative in $S^\prime$ of
  $$\tilde b(v)-a(v)-\ph^\prime(v)-{\textstyle\int_0^1 H_k(dv)}.$$
  All four of these terms are polynomially bounded in terms of $\ph_k$ and $H_k$,
  with the polynomial depending only on the differential on $\mathcal{M}^*_Y$ and
  the structure of $W$ and $W'$.
\end{proof}
All the above results, starting with Lemma \ref{lem:WExist}, also hold for the de
Rham algebra $\Omega^*X$; in this case one should talk of $\mathbb{R}[W]$ rather
than $\mathbb{Q}[W]$.

We now give the two main examples of polynomially bounded functionals which
motivate the definitions.
\begin{thm} \label{lipPBB}
  Suppose that $X$ and $Y$ are finite complexes with piecewise Riemannian
  metrics, $Y$ nilpotent.  Then the functional $\Lip:[X,Y] \to \mathbb{R}^+$
  given by
  $$\Lip \alpha=\inf \{\Lip f:
  f\text{ is a Lipschitz representative of }\alpha\}$$
  is polynomially bounded below.
\end{thm}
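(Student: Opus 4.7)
The plan is to produce, from any $L$-Lipschitz representative $f: X \to Y$ of a class $\alpha \in [X,Y]$, a DGA homomorphism $\varphi: \mathcal{M}_Y^* \to \mathbb{R}[W(m_X,Y)]$ whose homotopy class equals $\alpha_{(0)}$ and whose operator norm is bounded by a polynomial in $L$. Minimizing over Lipschitz representatives then yields $\min_\varphi \lVert \varphi \rVert_{\mathrm{op}} \leq P(\Lip \alpha)$ for some polynomial $P$, which is exactly the content of $\Lip$ being polynomially bounded below (with exponent $1/\deg P$ and with respect to $W(m_X,Y)$). The real analogue of Lemma \ref{lem:WBest} then upgrades this to polynomial boundedness below for any $W$.

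First I would start from the pullback. Fix a real minimal model $m_Y: \mathcal{M}_Y^* \to \Omega^*Y$. Then the composition $f^* \circ m_Y: \mathcal{M}_Y^* \to \Omega^*X$ represents $\alpha_{(0)}$, and because $f$ is $L$-Lipschitz, pullback multiplies the comass of a $k$-form by at most $L^k$. Consequently, on an indecomposable $v$ of degree $k$, the image $f^*(m_Y(v))$ has comass $O(L^k)$, which is polynomially bounded in $L$. So $f^* \circ m_Y$ is already a DGA homomorphism with polynomially bounded operator norm, although it lands in the infinite-dimensional target $\Omega^*X$ rather than in $\mathbb{R}[W(m_X,Y)]$.

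Second, I would move this homomorphism, up to DGA homotopy, into the finite-dimensional target $\mathbb{R}[W(m_X,Y)]$ while maintaining polynomial norm control. The construction mirrors the inductive argument used in the proof of Lemma \ref{lem:WBest} verbatim, simply with $f^* \circ m_Y$ playing the role of $\varphi'$: one builds $\varphi$ and a homotopy $H$ to $f^* \circ m_Y$ one elementary extension $\mathcal{M}_Y^*(k+1) = \mathcal{M}_Y^*(k) \otimes {\wedge V}$ at a time, applying Proposition \ref{extExist} with a fixed bounded right inverse of $d$ on a chosen complement $S_{k+1} \subset W$ and a fixed projection to $\mathcal{H}^{n_{k+1}}(X; \mathbb{R}) \subset W$. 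Each step blows up the operator norms of the current $\varphi$ and $H$ by at most a polynomial factor depending only on $X$, $Y$, and $W$; since there are only finitely many steps (one per degree $\leq \dim X$), the composite bound is still polynomial in $L$.

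The main obstacle is tracking norms through this induction, which reduces to verifying that the finite-dimensional linear maps used at each step — namely, inverting $d|_{S_{k+1}}$, projecting onto harmonic representatives, and antidifferentiating in the $t$-direction via $\int_0^1$ and $\int_0^t$ — have bounded operator norms. But these are linear maps between fixed finite-dimensional normed vector spaces depending only on the combinatorial data of $X$ and $Y$, so their norms are constants of the problem. Packaging these constants into the inductive estimate yields the desired polynomial $P$, and hence the theorem.
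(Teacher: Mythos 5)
The proposal follows the right outline (start from $f^* \circ m_Y$, imitate the induction of Lemma~\ref{lem:WBest}, keep polynomial norm bounds at each stage), and the reduction from a general $W$ to $W(m_X,Y)$ via the real analogue of Lemma~\ref{lem:WBest} is correct. But there is a genuine gap in the norm-tracking step, and it is precisely the point on which the paper's proof differs from a verbatim rerun of Lemma~\ref{lem:WBest}.

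In the proof of Lemma~\ref{lem:WBest}, the homomorphism $\ph'$ being corrected sends indecomposables into a \emph{fixed} finite-dimensional subspace $W'$, so the cycle
$$\tilde b(v)-\ph'(v)-{\textstyle\int_0^1}H_k(dv)$$
lands in a finite-dimensional subspace $S_{k+1}+U_k+W'$ depending only on $W$, $W'$, and the model, and one can choose an antiderivative from a fixed finite-dimensional complement $S'$ with bounded operator norm. In Theorem~\ref{lipPBB}, by contrast, the role of $\ph'$ is played by $f^*m_Y$, whose image $f^*m_Y(V)$ is \emph{not} contained in any finite-dimensional subspace of $\Omega^*X$ fixed independently of $f$ --- it depends on the Lipschitz map $f$, which ranges over an infinite-dimensional family. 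Consequently the homotopy $H_k$ does not land in a fixed finite-dimensional subspace either, and there is no uniformly bounded finite-dimensional right inverse of $d$ from which to choose $c(v)$. Your claim that ``these are linear maps between fixed finite-dimensional normed vector spaces'' fails exactly here.

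What is needed instead is an analytic input: a coisoperimetric (or ``cofilling'') inequality for the de Rham complex of a compact Riemannian manifold (or finite complex), i.e.~the existence of a constant $I(X,n-1)$ so that every exact $n$-form $\beta$ admits an antiderivative $\alpha$ with $\lVert\alpha\rVert_\infty \leq I(X,n-1)\lVert\beta\rVert_\infty$. This is Lemma~\ref{lem:coIP} in the paper, and it is the step your argument is missing. With it, the rest of your induction goes through, since the other operations ($\int_0^1$, $\int_0^t$, and projection onto a fixed harmonic/representative subspace $\mathcal{H}^{n_{k+1}}$, which is bounded once you know the $\infty$-norm of the cycle and hence of its periods) are indeed bounded. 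Note also that the paper first reduces to the case where $X$ and $Y$ are compact Riemannian manifolds with boundary using invariance under Lipschitz homotopy equivalence, which is the convenient setting for stating and applying the coisoperimetric inequality.
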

\begin{thm} \label{torGrowth}
  For finite complexes $X$ and $Y$ with $Y$ nilpotent, and for any
  $W \subset A^*X$ as in the statement of Lemma \ref{lem:WExist}, the number of
  homotopy classes in $[X,Y]$ whose image in $[\mathcal{M}_Y,A^*X]$ has a
  representative in the $R$-ball in $\Hom(\mathcal{M}^*_Y,W)$ is bounded by a
  polynomial in $R$.  In particular, the functional
  $\#:[X,Y_{(0)}] \to \mathbb{R}^+$, which measures the size of the preimage in
  $[X,Y]$ of each class under composition with the rationalization map, is
  polynomially bounded.
\end{thm}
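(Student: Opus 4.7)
My plan is to prove Theorem \ref{torGrowth} by induction on the elementary extensions of the minimal model $\mathcal{M}_Y^*$. By Lemma \ref{lem:WBest} it suffices to work with $W = W(m_X,Y)$, so we may restrict to lifts sending indecomposables into this fixed finite-dimensional subspace. Let $N_k(R)$ denote the number of homotopy classes of maps $\mathcal{M}_Y^*(k) \to A^*X$ admitting a representative $\psi$ with $\|\psi\|_{\mathrm{op}} \leq R$ and with $\psi_{(0)}$ in the image of $[X,Y_{(k)}]$ (i.e., integrally realized). The base case is trivial since $\mathcal{M}_Y^*(1) = \mathbb{Q}$. For the inductive step, write $\mathcal{M}_Y^*(k+1) = \mathcal{M}_Y^*(k) \otimes \wedge V$ with $V$ in degree $n := n_{k+1}$, and seek to bound the number of integrally realized homotopy classes of extensions of a given $\psi_k$, in terms of $\|\psi_k\|_{\mathrm{op}}$ and the target norm $R$.

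The key tool is the comparison between the topological exact sequence \eqref{ES:spaces} applied to the principal fibration $K(\pi,n) \to Y_{(k+1)} \to Y_{(k)}$ (where $\pi = \pi_n(Y_{(k+1)})$) and the DGA exact sequence \eqref{ES:algebras} for the extension $\mathcal{M}_Y^*(k) \hookrightarrow \mathcal{M}_Y^*(k+1)$. Since the comparison map between them is tensoring with $\mathbb{Q}$, the discrepancy between integral and rational content at each step is concentrated in torsion abelian groups. Concretely, the extensions of a given $\psi_k$ in $[X, Y_{(k+1)}]$ form a torsor under $\mathrm{coker}(\iota_1 \colon \pi_1((Y_{(k)})^X, f) \to H^n(X;\pi))$, and the ratio of integral to rational torsors is controlled by (i) the kernel of the localization $\pi \to \pi \otimes \mathbb{Q}$ (a uniformly finite group) and (ii) the cokernel of the integral $\iota_1$ modulo its rational image, whose size is what produces the torsion growth observed in Example \ref{ex:3x4}. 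The explicit formula for $\iota_1$ extracted from the obstruction formula of Proposition \ref{extExist} is polynomial in the data $\psi_k$, so this cokernel has index polynomial in $\|\psi_k\|_{\mathrm{op}}$.

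The second ingredient is the density count: rational extensions of $\psi_k$ whose total norm stays $\leq R$ form a bounded region in an affine subspace of $\Hom(V, \mathbb{Q}[W]^n)$, and the integrally realized ones lie on a (finite union of cosets of a) lattice inside this affine subspace, with discriminant again polynomial in $\|\psi_k\|_{\mathrm{op}}$. The number of lattice points in an $R$-ball is therefore polynomial in $R$ of degree at most $\dim \Hom(V, \mathbb{Q}[W]^n)$. Multiplying the density estimate by the torsion count from the previous paragraph yields a bound of the form $C(1 + \|\psi_k\|_{\mathrm{op}})^{a} R^{b}$ on the number of integrally realized extensions, where $a$ and $b$ depend only on $X$, $Y$, and $W$.

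The main obstacle is ensuring that, as we iterate through the finitely many stages $k = 1, \ldots, r$ with $n_r \leq \dim X$, the bounds remain polynomial in the final $R$ alone. The point is that if $\psi_{k+1}$ has norm at most $R$, one can take $\psi_k := \psi_{k+1}|_{\mathcal{M}_Y^*(k)}$ with norm at most $R$ as well; and the lattice-point and torsion estimates above are polynomial in $R$ with degree independent of $R$. Consequently, $N_{k+1}(R) \leq C' N_k(R) \cdot R^{a+b}$, and the induction gives $N_r(R) = O(R^p)$ for some $p$ depending only on $X$, $Y$, and $W$. Specializing to the case $R \geq \min \|\ph\|_{\mathrm{op}}$ as in \eqref{bdAbove} for $\ph$ ranging over rational classes in $\Hom(\mathcal{M}_Y^*, \mathbb{Q}[W])$ shows that the functional $\#$ is polynomially bounded, completing the proof.
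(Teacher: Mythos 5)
Your proposal follows the same overall strategy as the paper — induction on the stages of the Postnikov tower of $Y$, comparing the topological exact sequence \eqref{ES:spaces} with its DGA analogue \eqref{ES:algebras} stage by stage, and bounding a product of a ``density'' factor and a ``torsion'' factor — but there is a genuine gap in the central quantitative step. You claim that because the formula for $\iota_1$ (extracted from Proposition \ref{extExist}) is polynomial in $\psi_k$, the cokernel of the integral $\iota_1$ ``has index polynomial in $\lVert\psi_k\rVert_{\mathrm{op}}$'' and the lattice of integrally realized extensions has ``discriminant polynomial in $\lVert\psi_k\rVert_{\mathrm{op}}$.'' Neither follows directly from the polynomiality of the formula. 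The density of $\img(\iota_1)$ inside $H^{n_k}(A^*X;V_k^*)$ depends not only on the operator norm of $\iota_1$ but also on how coarsely the (generally \emph{nonabelian}) group $\pi_1((Y_{k-1})^X,f)$ sits inside $\Pi(k-1,\ph)$ under the rationalization map — a quantity that must itself be tracked recursively from the previous stage, and which degrades as the induction proceeds.

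The paper supplies exactly this missing machinery: it first proves two \emph{quantitative four lemmas} (a $C$-injectivity and a $C$-surjectivity version) for diagrams comparing exact sequences of finitely generated groups with exact sequences of normed vector spaces, and then uses them inductively to construct polynomials $P_k$ bounding the surjectivity constant of $\pi_1((Y_k)^X,f) \to \Pi(k,\ph)$ and polynomials $P'_k$ bounding the injectivity constant on sets of lifts versus extensions. The recursion $P_{k-1} \rightsquigarrow P_k$ does absorb a factor $Q_k(\lVert\ph\rVert)$ coming from the Leibniz rule — which is precisely the ``polynomial in $\psi_k$'' dependence you identified — but it also raises the previous surjectivity bound to the power $\rk\iota_1 + 1$, so the degrees compound through the stages. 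Without some such quantitative diagram chase, the step from ``$\iota_1$ has a polynomial formula'' to ``the integral image has controlled covolume'' is unproven, and your recursion $N_{k+1}(R) \leq C' N_k(R) \cdot R^{a+b}$ is asserted rather than established. Your high-level outline is the right one, but the lemma that makes it rigorous is the heart of the paper's argument and is absent from your sketch.
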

These two results combine to immediately yield Gromov's conjecture.
\begin{cor}
  If $X$ and $Y$ are finite complexes and $Y$ is nilpotent, the number of
  homotopy classes of maps $X \to Y$ which have representatives with Lipschitz
  constant at most $L$ is bounded by a polynomial in $L$.
\end{cor}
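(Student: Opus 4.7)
The plan is to deduce the corollary by composing the two preceding results: Theorem \ref{lipPBB} turns a bound on the Lipschitz constant into a bound on the operator norm of some DGA representative of the rationalization, and Theorem \ref{torGrowth} then counts how many integral homotopy classes can sit above such a bounded set of DGA representatives.

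First, fix a minimal model $m_X: \mathcal{M}_X^* \to A^*X$, set $W = W(m_X, Y)$ as constructed in Lemma \ref{lem:WExist}, and fix norms on $W$ and on the finitely many indecomposables of $\mathcal{M}_Y^*$ in degrees $\leq \dim X$. Suppose $\alpha \in [X,Y]$ has some representative $f: X \to Y$ with $\Lip f \leq L$. By Theorem \ref{lipPBB}, applied with this choice of $W$ (which is allowed since $\Lip$ is polynomially bounded below with respect to \emph{some} $W$, and Lemma \ref{lem:WBest} lets us transfer the statement to $W(m_X, Y)$ at the cost of replacing the exponent), there exist constants $c > 0$ and $p > 0$, depending only on $X$ and $Y$, such that
\[
\Lip \alpha \;\geq\; c\bigl(\min\{\lVert\ph\rVert_{\mathrm{op}} : \ph \in \Hom(\mathcal{M}_Y^*, \mathbb{Q}[W]),\ [\ph] = \alpha_{(0)}\}\bigr)^p.
\]
Since $\Lip \alpha \leq \Lip f \leq L$, this gives a DGA representative $\ph$ of $\alpha_{(0)}$ with $\lVert\ph\rVert_{\mathrm{op}} \leq (L/c)^{1/p}$; in particular $\ph$ lies in the $R$-ball in $\Hom(\mathcal{M}_Y^*, \mathbb{Q}[W])$ for $R = (L/c)^{1/p}$.

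Now invoke Theorem \ref{torGrowth}: the number of homotopy classes $\alpha \in [X,Y]$ whose image $\alpha_{(0)}$ admits a representative in this $R$-ball is bounded by some polynomial $Q(R)$. Composing, the number of homotopy classes with a representative of Lipschitz constant $\leq L$ is at most $Q\bigl((L/c)^{1/p}\bigr)$, which is a polynomial in $L$. This is exactly the claim of the corollary.

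There is essentially no obstacle beyond bookkeeping: the corollary is a genuine corollary in that all the analytic and homotopical work has already been done in Theorems \ref{lipPBB} and \ref{torGrowth}, and Lemma \ref{lem:WBest} ensures that the particular $W$ for which the ``polynomially bounded below'' estimate holds is compatible with the one used in Theorem \ref{torGrowth}. The only care needed is to make sure one does not conflate the two different polynomial degrees (the $p$ coming from $\Lip$ versus the degree of $Q$), and to note that the implicit constants all depend on $X$, $Y$, and the chosen metrics and norms, but not on $L$.
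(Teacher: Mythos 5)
Your proof is correct and takes essentially the same route as the paper, which simply remarks that the corollary follows immediately from Theorems \ref{lipPBB} and \ref{torGrowth}; you have spelled out the bookkeeping (choice of $W$, transfer via Lemma \ref{lem:WBest}, composing the two polynomial bounds) in a way that matches the intended argument.
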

The proof of Theorem \ref{torGrowth} is deferred to Section \ref{S:QFin}.

Before we begin the proof of Theorem \ref{lipPBB}, we remark that since we have
not used the equivalence between the rational homotopy categories of spaces and
DGAs, everything discussed thus far in this section is true for real DGAs as well
as rational ones.  In other words, if $X$ is a smooth manifold, perhaps with
boundary, we can replace $A^*X$ with $\Omega^*X$ without changing any of the
arguments.
\begin{proof}[Proof of Theorem \ref{lipPBB}.]
  We first note that this property is invariant under Lipschitz homotopy
  equivalence.  Therefore we can assume that $X$ and $Y$ are compact Riemannian
  manifolds with boundary, by embedding them in some high-dimensional Euclidean
  space and thickening.

  From here, the proof follows the same outline as that of Lemma \ref{lem:WBest}.
  Let $f:X \to Y$ be a map with Lipschitz constant $L$.  Fix real minimal models
  $m_X:\mathcal{M}_X^* \to \Omega^*X$ and $m_Y:\mathcal{M}_Y^* \to \Omega^*Y$
  (here again the fact that $\mathcal{M}_X^*$ may not be a good homotopical model
  is immaterial).  We would like to show that for some polynomial $P$ depending
  only on $m_Y$ and $m_X$, $f^*m_Y:\mathcal{M}_Y^* \to \Omega^*X$ is homotopic to
  a map $\ph:\mathcal{M}_Y^* \to W(m_X,Y)$ such that $\lVert\ph\rVert \leq P(L)$.

  This is proved by induction on the minimal model of $Y$.  We let the
  $\infty$-norm be our chosen norm on $\Omega^*X$.  This gives us an operator
  norm on maps $\mathcal{M}^*_Y \to \Omega^*X$ which is well-defined up to a
  constant.  Notice that the Lipschitz condition implies that for
  $\omega \in \Omega^n(Y)$,
  $$\lVert f^*\omega \rVert_\infty \leq L^n\lVert \omega \rVert_\infty.$$
  In other words, $\lVert f^*m_Y \rVert_{\text{op}} \leq C(\dim X,m_Y)L^{\dim X}$.

  At each stage of the induction we produce maps
  $\ph_k:\mathcal{M}_Y^*(k) \to W(m_X,Y)$ and $H_k:\mathcal{M}_Y^*(k) \to
  \Omega^*X \otimes \wedge(t,dt)$ such that:
  \begin{itemize}
  \item $\lVert \ph_k \rVert \leq P_k(L)$;
  \item if we write
    $$H_k(y)=\sum_{i=0}^r I_{k,i}(y) \otimes t^i
    +\sum_{j=0}^s J_{k,j}(y) \otimes t^jdt,$$
    then $r$ and $s$ depend only on $X$ and $Y$,
    $\lVert I_{k,i} \rVert_{\text{op}} \leq P_k(L)$, and
    $\lVert J_{k,j} \rVert_{\text{op}} \leq P_k(L)$.
  \end{itemize}
  The induction step proceeds exactly as in Lemma \ref{lem:WBest}, except that
  since $f^*m_Y$ is not guaranteed to land in a finite subspace, neither can we
  guarantee this about $H_k$.  Therefore, we also cannot choose antiderivatives
  from a finite subspace.  Instead we use the following lemma, which dates back
  to \cite{GrDil} and is proved carefully for simplicial complexes as
  \cite[Lemma 2--2]{PCDF}:
  \begin{lem}[Coisoperimetric inequality] \label{lem:coIP}
    Given a compact Riemannian manifold $X$, there is a constant $I(X,n-1)$ such
    that any exact form $\beta \in \Omega^n(X)$ has an antidifferential
    $\alpha \in \Omega^{n-1}(X)$ with $\lVert\alpha\rVert_\infty \leq I(X,n-1)
    \lVert\beta\rVert_\infty$.
  \end{lem}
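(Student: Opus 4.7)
The plan is to produce a canonical antiderivative via Hodge theory and then bound it in $L^\infty$ using elliptic regularity together with a Sobolev embedding.

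First, using the given Riemannian metric, form the Hodge Laplacian $\Delta = dd^* + d^*d$ on $\Omega^*(X)$ and its Green's operator $G$, which inverts $\Delta$ on the orthogonal complement of the harmonic forms. For exact $\beta \in \Omega^n(X)$, set $\alpha := d^* G \beta$; I claim $d\alpha = \beta$. Since $\beta$ is exact it is orthogonal to all harmonic forms, so $\beta = \Delta G \beta = dd^* G \beta + d^* d G \beta$. The piece $d^* d G \beta$ is coexact, while $\beta - dd^* G \beta$ is exact (both summands are exact). A form that is simultaneously exact and coexact is orthogonal to itself and hence zero, so $\beta = dd^* G \beta = d\alpha$.

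Next, $d^* G$ is a pseudodifferential operator of order $-1$ on the compact manifold $X$; standard elliptic regularity gives, for every $1 < p < \infty$, a constant $C_p(X)$ depending only on the Riemannian geometry such that $\lVert d^* G \omega \rVert_{W^{1,p}} \leq C_p(X) \lVert \omega \rVert_{L^p}$ for every smooth $n$-form $\omega$. Fix any $p > n := \dim X$; the Sobolev embedding $W^{1,p}(X) \hookrightarrow C^0(X)$ then yields $\lVert \alpha \rVert_\infty \leq C'(X,p) \lVert \alpha \rVert_{W^{1,p}} \leq C''(X,p) \lVert \beta \rVert_{L^p} \leq C''(X,p)\, \vol(X)^{1/p} \lVert \beta \rVert_\infty$, where the last step uses compactness of $X$. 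Taking $I(X,n-1)$ to be this final constant proves the lemma.

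The main obstacle is that $L^\infty$ is not a natural setting for elliptic regularity, which forces the detour through a Sobolev exponent $p > \dim X$ and the embedding into $C^0$; in particular, the constant has no clean Hodge-theoretic description. An alternative route, closer in spirit to the proof cited from \cite{PCDF}, is to bypass analysis altogether: fix a sufficiently fine triangulation, integrate $\beta$ to obtain a simplicial cochain whose sup-norm is controlled by $\lVert \beta \rVert_\infty$, solve the cobounding problem in the finite-dimensional simplicial cochain complex by linear algebra, and reconstruct a smooth primitive using Whitney forms. The resulting constant then depends on combinatorics of the triangulation rather than elliptic estimates, but the conclusion is identical.
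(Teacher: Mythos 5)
Your Hodge-theoretic argument is correct for closed manifolds and is a genuinely different route from the one the paper leans on: the paper does not prove the lemma, but cites \cite[Lemma~2--2]{PCDF}, where the proof proceeds by the combinatorial/Federer--Fleming-style strategy you sketch at the end (integrate $\beta$ to a simplicial cochain, cobound it by linear algebra, and smooth back via Whitney forms). A small useful simplification of your step showing $d^*dG\beta=0$: since $G$ commutes with $d$ and $\beta$ is closed, $d^*dG\beta=d^*Gd\beta=0$ directly, without invoking the exact-and-coexact argument. The two approaches have different reach: your pseudodifferential/Sobolev route gives a clean conceptual proof and arguably sharper control of the constant on closed manifolds, but in the application (proof of Theorem~\ref{lipPBB}) $X$ is replaced by a compact manifold \emph{with boundary}, where Hodge theory requires a choice of boundary conditions (the Hodge--Morrey--Friedrichs decomposition) and the Green's operator argument is no longer off-the-shelf; you would need either to fix absolute or relative boundary conditions and check that the exactness and order-$(-1)$ mapping properties persist, or to double the manifold and control the extension. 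The simplicial/Whitney route in \cite{PCDF} is more elementary, applies verbatim to simplicial complexes and manifolds with boundary, and is therefore the version the paper actually needs, even though its constant carries the combinatorics of a triangulation rather than a clean analytic meaning.
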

  We then produce $\ph_{k+1}$ and $H_{k+1}$ as before.  Write
  $\mathcal{M}^*_Y(k+1)=\mathcal{M}^*_Y(k) \otimes {\wedge V_{k+1}}$.  First we
  produce $\tilde b(v)$ for each element $v$ of a basis for $V_{k+1}$.  This
  gives a cycle
  $$\tilde b(v)-f^*m_Y(v)-{\textstyle\int_0^1 H_k(dv)}$$
  whose $\infty$-norm is polynomially bounded in $L$; this also gives a bound on
  its cohomology class, obtained by integrating it against cycles generating
  $H_{n_{k+1}}(X;\mathbb{R})$.  Thus we can choose $a(v)$ and $b(v)$ as before.
  Applying the coisoperimetric inequality to
  $$\tilde b(v)-a(v)-f^*m_Y(v)-{\textstyle\int_0^1 H_k(dv)}$$
  we get a polynomially bounded $c(v)$ and finally obtain $\ph_{k+1}$ and
  $H_{k+1}$ which also have polynomial estimates.
\end{proof}

\section{Quantitative finiteness} \label{S:QFin}

We now demonstrate Theorem \ref{torGrowth}.  Sullivan's result is proven by using
obstruction-theoretic exact sequences and the five lemma; for the quantitative
version, we will develop some quantitative homological algebra.
\begin{defn}
  Let $h:A \to V$ be a homomorphism from a finitely generated group to a normed
  $\mathbb{Q}$-vector space.  We say that $h$ is
  \begin{itemize}
  \item \emph{$C$-injective} if for every 1-ball $B$ in $V$,
    $\#h^{-1}(B) \leq C$;
  \item \emph{$C$-surjective} if every point of $V$ is within $C$ of $h(A)$.
  \end{itemize}
\end{defn}
\begin{lem}[Quantitative four lemmas]
  Suppose that
  $$\xymatrix{
    A_1 \ar[r]^{f_1} \ar[d]^{\ph_1} & A_2 \ar[r]^{f_2} \ar[d]^{\ph_2} &
    A_3 \ar[r]^{f_3} \ar[d]^{\ph_3} & A_4 \ar[d]^{\ph_4} \\
    V_1 \ar[r]^{m_1} & V_2 \ar[r]^{m_2} & V_3 \ar[r]^{m_3} & V_4
  }$$
  are exact sequences with $A_i$ finitely generated groups and $V_i$
  finite-dimensional normed $\mathbb{Q}$- or $\mathbb{R}$-vector spaces, such
  that $m_1$ and $m_3$ have operator norm $\leq 1$.  Let $\tau$ be a constant
  such that $m_2$ satisfies
  $$\min\{\lVert u \rVert: u \in m_2^{-1}(v)\} \leq \tau\lVert v \rVert
  \text{ for every }v \in m_2(V_2).$$
  \begin{enumerate}
  \item If $\ph_2$ is $C_2$-injective, $\ph_4$ is $C_4$-injective and $\ph_1$ is
    $C_1$-surjective, then $\ph_3$ is
    $(C_1+\tau)^{\rk m_1}\tau^{\rk m_2}C_2C_4$-injective.
  \item If $\ph_1$ is $C_1$-surjective, $\ph_3$ is $C_3$-surjective and $\ph_4$
    is $C_4$-injective, then $\ph_2$ is
    $(C_1+3\tau C_3^{\rk m_3+1}C_4)$-surjective.
  \end{enumerate}
\end{lem}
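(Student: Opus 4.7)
The plan is to mimic the classical four-lemma diagram chase in both parts, carefully tracking quantitative data at every step. The key technical devices are: (a) a linear section $s:\img m_2\to V_2$ of $m_2$ with $\|s\|_{\text{op}}\leq \tau$, promoted from the pointwise hypothesis so that set-theoretic exactness at $V_2$ becomes a bounded splitting; (b) the observation that a $C$-injective map has $C\cdot N$-bounded preimages on any region coverable by $N$ unit balls; and (c) the operator-norm bound $\|m_1\|,\|m_3\|\leq 1$, which propagates norms without inflation.

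For Part (1), given a 1-ball $B\subset V_3$, I would bound $\#\ph_3^{-1}(B)$ as follows. Applying $f_3$ and using that $m_3(B)$ is a 1-ball, $C_4$-injectivity of $\ph_4$ yields at most $C_4$ possible values of $f_3(a_3)$. Inside a fixed fiber, the remaining elements have the form $a_3^{(0)}+f_2(a_2)$ for $a_2\in A_2$ (modulo $f_1(A_1)$) with $m_2\ph_2(a_2)=\ph_3(f_2 a_2)$ confined to a 2-ball. The section $s$ gives a lift $u=s(m_2\ph_2(a_2))$ of norm $\leq 2\tau$; then $\ph_2(a_2)-u\in\ker m_2=m_1(V_1)$, and $C_1$-surjectivity of $\ph_1$ lets us adjust by $f_1(A_1)$ to reduce the residual piece to norm $\leq C_1$. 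Every class in $A_2/f_1(A_1)$ thus has a representative whose $\ph_2$-image lies in a ``box'' with $\rk m_2$ sides of length $\sim\tau$ (the complement direction) and $\rk m_1$ sides of length $\sim C_1+\tau$ (the kernel direction, wider to absorb the non-orthogonality of $s$). Covering this box by unit balls and invoking $C_2$-injectivity of $\ph_2$ yields the bound $(C_1+\tau)^{\rk m_1}\tau^{\rk m_2}C_2 C_4$.

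For Part (2), given $v_2\in V_2$, I would apply $\ph_3$-surjectivity to $m_2(v_2)$ to find $a_3$ with $\|\ph_3(a_3)-m_2(v_2)\|\leq C_3$. Then $b_4:=f_3(a_3)$ satisfies $\ph_4(b_4)=m_3\ph_3(a_3)\in B(0,C_3)\cap\img m_3$, a $C_3$-ball in a $\rk m_3$-dimensional subspace, so by $C_4$-injectivity and covering, $b_4$ lies in a set of size $\lesssim C_4C_3^{\rk m_3}$. The classical step ``$\ph_4$ injective forces $f_3(a_3)=0$'' is replaced by constructing a correction $\tilde a_3\in f_3^{-1}(b_4)$ with $\ph_3\tilde a_3$ bounded: applying $\ph_3$-surjectivity near zero and pigeonholing through the finite list of candidate $b_4$-values produces such an element, at the price of a radius on the order of the number of candidates, yielding $\|\ph_3\tilde a_3\|=O(C_3^{\rk m_3+1}C_4)$. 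Setting $a_3':=a_3-\tilde a_3\in\ker f_3=\img f_2$, writing $a_3'=f_2(a_2)$, and $\tau$-inverting $m_2$ places $\ph_2(a_2)-v_2$ within $3\tau C_3^{\rk m_3+1}C_4$ of $\ker m_2$. A final $f_1(A_1)$-correction via $\ph_1$-surjectivity brings the total error to $C_1+3\tau C_3^{\rk m_3+1}C_4$.

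The main obstacle is the construction of $\tilde a_3$ in Part (2): $f_3^{-1}(b_4)$ is a coset of $\ker f_3=\img f_2$ whose elements can a priori have arbitrary $\ph_3$-image, so we cannot lift $b_4$ with bounded $\ph_3\tilde a_3$ directly. Using $\ph_3$-surjectivity at the origin produces small-$\ph_3$-image elements whose $f_3$-images may realize some but not necessarily all candidate $b_4$-values. Handling this carefully -- by enlarging the radius until each candidate is realized, which is what the multiplicative factor $C_3^{\rk m_3}C_4$ captures -- and then combining the resulting per-step error contributions yields the claimed bound.
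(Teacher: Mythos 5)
Your Part (1) is essentially the paper's argument: fix a preimage $v_2$ of the center of $B$, use the $\tau$-condition to show any lift of an element of $\ph_3^{-1}(B)\cap\ker f_3$ has $\ph_2$-image within $\tau$ of the affine subspace $v_2+m_1(V_1)$, adjust by $f_1(A_1)$ to confine it to a box of dimensions $(C_1+\tau)^{\rk m_1}\times\tau^{\rk m_2}$ (using $\dim V_2=\rk m_1+\rk m_2$ from exactness), then invoke $C_2$-injectivity and $C_4$-injectivity of $\ph_4\circ f_3$. One small remark: a \emph{linear} section $s$ with $\lVert s\rVert_{\mathrm{op}}\leq\tau$ need not exist (a pointwise almost-minimal preimage cannot in general be linearized without a dimension-dependent loss), but you never actually use linearity, only the pointwise bound, so this does not harm the argument.

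Part (2) is where you correctly identify the obstacle but do not close it. You want $\tilde a_3\in f_3^{-1}(b_4)$ with $\ph_3(\tilde a_3)$ bounded, and you propose to get it by ``applying $\ph_3$-surjectivity near zero and pigeonholing through the finite list of candidate $b_4$-values, enlarging the radius until each candidate is realized.'' There is no reason this terminates or produces the desired element: the (at most $C_3^{\rk m_3}C_4$) values that $f_3$ takes on elements with $\ph_3$-image near the origin may be an entirely different subset from the values it takes near $m_2(v)$, and no amount of enlarging a ball around the origin is guaranteed to make the two intersect. The paper avoids the problem with a different idea: rather than matching $b_4$ to something near the origin, it interpolates. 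Fix $N=C_3^{\rk m_3}C_4$ and consider the $N+1$ points $\frac{i}{N}m_2(v)$, $i=0,\dots,N$, along the segment from $0$ to $m_2(v)$; each lies in $\img m_2\subset\ker m_3$, so a $C_3$-surjectivity witness $c_i\in A_3$ for each has $\ph_4 f_3(c_i)=m_3\ph_3(c_i)$ in a $C_3$-ball around $0$ inside $\img m_3$, hence the $f_3(c_i)$ take at most $N$ distinct values. By pigeonhole some $f_3(c_i)=f_3(c_j)$ with $i<j$, giving $c_j-c_i=f_2(b')$; subtracting $\ph_2(b')$ reduces $\lVert m_2(v-\ph_2(b'))\rVert$ by at least $C_3$, and iterating until the norm drops below $3C_3^{\rk m_3+1}C_4$ and then correcting along $\ker m_2=\img m_1$ via $\ph_1$-surjectivity gives the bound. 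The interpolation-and-pigeonhole step along the segment, rather than any attempt to lift $b_4$ near the origin, is the missing idea.
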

We remark that the groups $A_i$ are not necessarily abelian, although the $\ph_i$
of course factor through the abelianization map.
\begin{proof}
  We use a quantitative version of the usual diagram chasing arguments for
  proving the four lemmas.

  For the injectivity four lemma, we would like to show that for every 1-ball $B$
  in $V_3$,
  $$\#\ph_3^{-1}(B) \leq (C_1+\tau)^{\rk m_1}\tau^{\rk m_2}C_2C_4.$$
  First notice that $\#f_3(\ph_3^{-1}(B)) \leq C_4$.  Thus it is enough to show
  that for any $a \in A_4$,
  $$\#(f_3^{-1}(a) \cap \ph_3^{-1}(B)) \leq (C_1+\tau)^{\rk m_1}\tau^{\rk m_2}C_2.$$
  By shifting the center of $B$ by $-\ph_3(\tilde a)$ where $\tilde a$ is an
  arbitrary preimage of $a$, we see that it is enough to show this for $a=\id$.
  To do that, we will show that every element in $\ph_3^{-1}(B) \cap \ker f_3$ has
  a preimage in $A_2$ which lands within distance $\tau$ of a $C_1$-ball
  $\tilde B$ in a $\rk m_1$-dimensional affine subspace $v_2+m_1(V_1)$.

  Choose the center of $\tilde B$ to be an arbitrary preimage $v_2$ of the center
  of $B$.  Given $b \in \ph_3^{-1}(B) \cap \ker f_3$, choose a preimage
  $\tilde b \in A_2$: we know $\ph_2(\tilde b)$ is at most distance $\tau$ from
  $v_2+m_1(V_1)$.  Then we can choose $s \in A_1$ such that $m_1 \circ \ph_1(s)$
  is at most distance $C_1$ from $\ph_2(\tilde b)-v_2$, and therefore
  $\tilde b-f_1(s)$ is the preimage we're looking for.  This completes the proof
  of the injectivity lemma.

  For the surjectivity lemma, choose $v \in V_2$; we would like to show that
  there is an $a \in A_2$ such that $\ph_2(a)$ is contained in a
  $(C_1+3\tau C_3^{\rk m_3+1}C_4)$-ball around $v$.  For this, we will show that
  there is an element $b \in A_2$ such that
  $$\lVert m_2(v-\ph_2(b)) \rVert \leq 3C_3^{\rk m_3+1}C_4.$$
  We can find a point $v' \in \ker m_2$ whose distance from $v-\ph_2(b)$ is at
  most $3\tau C_3^{\rk m_3+1}C_4$.  Then there is an $\tilde a \in A_1$ such that
  $m_1 \circ f_1(\tilde a)$ is within $C_1$ of $v'$ and we can use
  $a=b+f_1(\tilde a)$.

  It remains to find $b$.  If $\lVert m_2(v) \rVert \leq 3C_3^{\rk m_3+1}C_4$, we
  can use $b=0$.  Otherwise, we show by induction that we can reduce to this
  case.  Let $N=C_3^{\rk m_3}C_4$, and consider the $N+1$ disjoint $C_3$-balls
  $B_i$ around $\frac{i}{N}m_2(v)$, $i=0,1,\ldots,N$.  Each of these has a
  preimage point $c_i \in A_3$; moreover, $\ph_4 \circ f_3$ sends each of the
  $c_i$ to the $C_3$-ball around zero in $V_4$, which means that they have at
  most $C_3^{\rk m_3}C_4$ distinct images under $f_3$.  By the pigeonhole
  principle, there are $i<j$ such that some $c_i \in \ph_3^{-1}(B_i)$ and
  $c_j \in \ph_3^{-1}(B_j)$ have $f_3(c_i)=f_3(c_j)$.  Then $c_j-c_i=f_2(b')$ for
  some $b' \in A_2$.  Moreover,
  $$\lVert m_2(v-\ph_2(b')) \rVert \leq \lVert m_2(v) \rVert-C_3.$$
  Now we repeat this process with $m_2(v-\ph_2(b'))$; after a finite number of
  steps, we get an element of length at most $3C_3^{\rk m_3+1}C_4$, and can set $b$
  to be the sum of all the $b'$s used along the way.
\end{proof}
We are now ready to prove the theorem along with the following extra statements:
\begin{lem} \label{injSur}
  Let $X$ and $Y$ be finite complexes with $Y$ nilpotent.  Then:
  \begin{enumerate}[label={(\roman*)}]
  \item For every $k$, there is a polynomial $P$ such that the rationalization
    map
    $$\pi_1((Y_k)^X,f) \to
    [\mathcal{M}^*_Y(k),A^*X \otimes {\wedge e^{(1)}}]_{f^*m_Y}$$
    is $P(\Lip f)$-surjective, where the norm on the latter is given by the
    operator norm on the indecomposables,
    $$\lVert \gamma \rVert_l=\inf \Bigl\{\max_{i \leq k}
    \lVert \eta|_{V_i} \rVert_{\mathrm{op}} \Bigm| \eta:\mathcal{M}_Y^*(k) \to A^*X
    \text{ s.t.~}[f^*m_Y+\eta \otimes e]=\gamma\Bigr\}.$$
  \item For every $k$, there is a polynomial $P'_k$ such that the map
    $$\left\{\begin{array}{c}\text{homotopy classes of}\\
    \text{lifts of $f_{(k-1)}$ to }Y_k\end{array}\right\} \to
    \left\{\begin{array}{c}\text{homotopy classes of}\\
    \text{extensions of $f^*m_Y(k-1)$ to }\mathcal{M}^*_Y(k)\end{array}\right\}$$
    is $P_k'(\Lip f)$-injective, where the norm on the set of extensions is
    induced by the obstruction in $H^k(X;\pi_k(Y) \otimes \mathbb{Q})$ to
    homotoping to some fixed extension.
  \end{enumerate}
\end{lem}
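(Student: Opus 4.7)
The plan is to prove (i) and (ii) by simultaneous induction on $k$, placing the rationalization map into commutative diagrams that pair \eqref{ES:spaces} for the stage $Y_k\to Y_{k-1}$ with \eqref{ES:algebras} for the corresponding elementary extension $\mathcal{M}_Y^*(k-1)\otimes{\wedge V_k}\to\mathcal{M}_Y^*(k)$ (write $n$ for $\deg V_k$), and then invoking the quantitative four lemmas. The base case $k=0$ is vacuous since $\mathcal{M}_Y^*(0)=\mathbb{F}$ and $Y_0$ is a point.

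For the inductive step of (i), I consider the commutative diagram whose rows are the middle four terms of \eqref{ES:spaces} and \eqref{ES:algebras}:
\begin{equation*}
\xymatrix@C=.3em{
H^{n-1}(X;\pi_k) \ar[r] \ar[d] & \pi_1(Y_k^X,f) \ar[r] \ar[d]^{\ph_2} & \pi_1(Y_{k-1}^X,f) \ar[r] \ar[d] & H^n(X;\pi_k) \ar[d] \\
H^{n-1}(X;V_k^*) \ar[r] & [\mathcal{M}_Y^*(k),A^*X\otimes{\wedge e}]_{f^*m_Y} \ar[r] & [\mathcal{M}_Y^*(k-1),A^*X\otimes{\wedge e}]_{f^*m_Y} \ar[r] & H^n(X;V_k^*)
}
\end{equation*}
I want to show that $\ph_2$ is polynomially surjective. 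The outer verticals reduce to the coefficient change $H^*(X;\pi_k)\to H^*(X;\pi_k\otimes\mathbb{Q})$, which by the universal coefficient theorem is uniformly $C$-surjective (respectively $C$-injective) with $C$ depending only on the torsion of $\pi_k(Y)$ and $H_*(X)$. The third vertical is $P_{k-1}(\Lip f)$-surjective by the inductive hypothesis. After rescaling so that $m_1$ and $m_3$ in the bottom row have operator norm at most $1$, the surjectivity four lemma delivers (i) at stage $k$.

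For (ii), I apply the injectivity four lemma to the final four terms of the same pair of sequences:
\begin{equation*}
\xymatrix@C=.3em{
\pi_1(Y_{k-1}^X,f) \ar[r] \ar[d] & H^n(X;\pi_k) \ar[r] \ar[d] & H^n(X;\pi_k)/\img(\iota_1^{\mathbb{Z}}) \ar[r] \ar[d]^{\ph_3} & 0 \ar[d] \\
[\mathcal{M}_Y^*(k-1),A^*X\otimes{\wedge e}]_{f^*m_Y} \ar[r] & H^n(X;V_k^*) \ar[r] & H^n(X;V_k^*)/\img(\iota_1^{\mathbb{Q}}) \ar[r] & 0
}
\end{equation*}
with the quotients identified with the sets of lifts and of extensions, respectively, so that $\ph_3$ is a homomorphism to a vector space. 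Now the first vertical is $P_{k-1}(\Lip f)$-surjective by (i) at stage $k-1$, the second is injective with constant $|H^n(X;\pi_k)_{\mathrm{tors}}|$, and the fourth is trivially injective; the four lemma then yields the desired polynomial injectivity of $\ph_3$.

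The essential technical point is controlling the ``section constant'' $\tau$ for the middle horizontal map of the bottom row in each diagram. In the first diagram, $\tau$ measures the cost of extending a class $\ph+\eta\otimes e$ from $\mathcal{M}_Y^*(k-1)$ to $\mathcal{M}_Y^*(k)$: for each new indecomposable $v\in V_k$ one must choose $\tilde\eta(v)$ whose coboundary equals $\eta(dv)$ minus a Leibniz-type correction built from $\ph=f^*m_Y$ and $\eta$. This is precisely the situation resolved by Lemma \ref{lem:coIP} in the proof of Theorem \ref{lipPBB}, and yields $\tau$ polynomial in $\Lip f$ with exponent depending only on $X$, $Y$, and $k$. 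In the second diagram, $\tau$ comes from splitting a quotient of finite-dimensional normed spaces and can be taken to be $1$. Since every constant in the four-lemma output is either fixed or polynomial in $\Lip f$, the claimed polynomials $P_k$ and $P_k'$ emerge.
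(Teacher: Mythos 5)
Your overall strategy---simultaneous induction on $k$, pairing the portions of \eqref{ES:spaces} and \eqref{ES:algebras} for stage $k$, and feeding them into the quantitative four lemmas---is exactly the paper's approach, and your treatment of the outer vertical maps as uniformly injective/surjective coefficient changes is correct. However, there is a genuine gap in how you handle the norms, and it is precisely the place where the paper has to work hardest. You say ``after rescaling so that $m_1$ and $m_3$ in the bottom row have operator norm at most $1$,'' but that rescaling factor is \emph{not} a constant: the map $\iota_1$ sends a self-homotopy $\ph+\eta\otimes e$ to $\eta\circ d|_{V_k}$, and because $dV_k$ consists of products of lower-degree generators, the Leibniz rule \eqref{Leibniz} produces terms like $\ph(u_i)\eta(y_i)$ whose size depends on $\lVert\ph\rVert$. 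So the operator norm of $\iota_1$ relative to the naive operator-norm-on-indecomposables on $\Pi(k-1,\ph)$ grows polynomially with $\Lip f$. Your inductive hypothesis hands you $C_3=P_{k-1}(\Lip f)$-surjectivity with respect to that naive norm, but the four lemma requires $m_3$ norm-nonincreasing; the two norms differ by a factor that must itself be shown polynomial. This is exactly why the paper introduces the distinct ``left'' norm $\lVert\cdot\rVert_l$ and ``right'' norm $\lVert\cdot\rVert_r$ on $\Pi(k-1,\ph)$ and proves $\lVert\gamma\rVert_r\leq Q_k(\lVert\ph\rVert)\lVert\gamma\rVert_l$ via \eqref{Leibniz} before applying the four lemma. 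As written, your argument quietly assumes that conversion factor away and the inductive step does not close.

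A secondary point: you attribute the bound on $\tau$ to the coisoperimetric inequality (Lemma \ref{lem:coIP}). The paper instead works entirely inside the finite-dimensional subalgebra $\mathbb{Q}[\hat W]$ produced by Lemma \ref{lem:WExist}, so that $\tau_k$ is a fixed constant depending only on $X$, $Y$, and the differentials---independent of $\ph$---and then appeals to Theorem \ref{lipPBB} only at the very end to convert $\lVert\ph\rVert$ into $\Lip f$. Your coisoperimetric route is closer in spirit to the proof of Theorem \ref{lipPBB} itself; it could in principle be adapted here, but without the left/right norm comparison carrying the Leibniz-generated polynomial, the $\tau$ you describe also cannot be independent of $\Lip f$ as you need it to be, and the same gap reappears under a different name. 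The fix is to make the two norms explicit and prove the comparison $Q_k$ is polynomial before invoking the four lemmas.
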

\begin{rmk}
  A similar proof, applied to a different portion of the long exact sequence,
  simultaneously proves that
  $$\pi_i(Y^X,f) \to \pi_i((Y_{(0)})^X,f_{(0)})$$
  is $P(\Lip f)$-injective and
  $$\pi_{i+1}(Y^X,f) \to \pi_{i+1}((Y_{(0)})^X,f_{(0)})$$
  is $P(\Lip f)$-surjective, for norms similar to those in Lemma \ref{injSur}(i).
  Thus we recover quantitative versions of the entirety of Sullivan's result.
\end{rmk}
\begin{proof}
  Write $Y$ as an inverse limit of a tower of spaces
  $$\cdots \to Y_k \to Y_{k-1} \to \cdots \to Y_0=*$$
  where each $Y_k \to Y_{k-1}$ is a principal $K(A_k,n_k)$-fibration,
  $n_k \geq n_{k-1}$.  Fix $W$ as in the previous section; let
  $\ph:\mathcal{M}_Y^* \to A^*X$ be a homomorphism which sends indecomposables to
  $W$ and is homotopic to $f^*m_Y$.

  Our goal here is to understand the behavior of $[X,Y]$, that is, $\pi_0$ of the
  mapping space $Y^X$.  To do this, we need to also consider the behavior of
  $\pi_1((Y_k)^X,f)$ at various stages $k$ and with various basepoints $f$, and
  its rationalization homomorphism to
  $$\Pi(k,\ph):=[\mathcal{M}^*_Y(k),A^*X \otimes {\wedge e^{(1)}}]_\ph.$$

  By induction on $k$, we will construct polynomials $P_k$ such that, for the
  norm in the statement of the lemma, the homomorphisms
  $$\pi_1((Y_k)^X,f) \to \Pi(k,\ph)$$
  are $P_k(\lVert\ph\rVert)$-surjective.  In turn, we will use this to construct
  polynomials $P^\prime_k$ such that the homomorphisms
  $$\left\{\begin{array}{c}\text{homotopy classes of}\\
  \text{lifts of $f$ to }Y_k\end{array}\right\} \to
  \left\{\begin{array}{c}\text{homotopy classes of}\\
  \text{extensions of $\ph$ to }\mathcal{M}^*_Y(k)\end{array}\right\}$$
  are $P'_k(\lVert\ph\rVert)$-injective, where the set of extensions is given a
  group structure by fixing a basepoint.  Then the number of classes in $[X,Y]$
  which map to the $R$-ball of $\Hom(\mathcal{M}^*_Y,\mathbb{Q}[W])$ is at most
  \begin{equation} \label{ballBound}
    R^{\sum_{k=1}^r \dim H^{n_k}(X;A_k \otimes \mathbb{Q})}\prod_{k=1}^r P^\prime_k(R),
  \end{equation}
  where $r=\max \{i: n_i \leq \dim X\}$.  This is the estimate we are looking
  for (although it may often be a drastic overcount).  The Lipschitz estimates
  then follow from Theorem \ref{lipPBB}.

  We now produce the polynomials $P_k$.  Of course, we can take $P_0=0$, since
  both groups are trivial.  For general $k$, we inductively apply the
  surjectivity four lemma to the subsequence
  \begin{equation} \label{ES:spaces2}
    H^{n_k-1}(X;A_k) \to \pi_1((Y_k)^X,\tilde f) \to \pi_1((Y_{k-1})^X,f) \to
    H^{n_k}(X;A_k)
  \end{equation}
  of the exact sequence \eqref{ES:spaces} and the corresponding subsequence
  \begin{equation} \label{ES:algebras2}
    H^{n_k-1}(A^*X;A_k \otimes \mathbb{Q}) \to \Pi(k,\ph) \xrightarrow{\rho}
    \Pi(k-1,\ph) \xrightarrow{\iota_1} H^{n_k}(A^*X;A_k \otimes \mathbb{Q})
  \end{equation}
  of \eqref{ES:algebras}.  To do this, we must put norms on the vector spaces in
  \eqref{ES:algebras2} that satisfy the relevant compatibility conditions.  Note
  that the groups in \eqref{ES:spaces2} are finitely generated as noted by
  Sullivan and perhaps already Serre.

  First, let $\hat W \supseteq W$ be a finite-dimensional subspace such that
  every homotopy class of homomorphism
  $\mathcal{M}_Y^* \to A^*X \otimes {\wedge e^{(1)}}$ has a
  representative which lands in $\mathbb{Q}[\hat W] \otimes {\wedge e}$.
  Such a subspace can be found by the method of Lemma \ref{lem:WExist}.  We put
  norms on each of the groups $\Hom(A_k,\mathbb{Q})$ ($V_k$ for short) and on the
  degree $\leq \dim X$ vectors in $\mathbb{Q}[\hat W]$.  This gives a
  well-defined operator norm on maps $V_k \to \mathbb{Q}[\hat W]$ with
  fixed-degree image---for example, on cochains that land in $\mathbb{Q}[\hat W]$.

  Now for $i=n_k$ and $n_k-1$ we define norms on $H^i(A^*X;V_k^*)$ by minimizing
  over cochain representatives that land in $\mathbb{Q}[\hat W]$:
  $$\lVert\alpha\rVert_H=\inf\left\{\lVert\psi\rVert_{\text{op}} \bigm| \psi:V_k
  \to \mathbb{Q}[\hat W]\text{ s.t.~}[\psi]=\alpha\right\}.$$
  Similarly, for $\gamma \in \Pi(k,\ph)$ we take the minimum over representatives
  of $\gamma$ of the operator norm on indecomposables, which we call the ``left
  norm'':
  $$\lVert \gamma \rVert_l=\inf \Bigl\{\max_{i \leq k}
  \lVert \eta|_{V_i} \rVert_{\text{op}} \Bigm| \eta:\mathcal{M}_Y^*(k) \to
  \mathbb{Q}[\hat W]\text{ s.t.~}[\ph+\eta \otimes e]=\gamma\Bigr\}.$$
  Finally, for $\gamma \in \Pi(k-1,\ph)$ we need to use a ``right norm'' which
  combines the left norm (for $\mathcal{M}_Y^*(k-1)$) and the homology norm on
  the image under $\iota_1$:
  $$\lVert \gamma \rVert_r=\inf \biggl\{\max\Bigl\{
  \max_{i \leq k-1} \lVert \eta|_{V_i} \rVert_{\text{op}},
  \lVert(\eta \circ d)|_{V_k}\rVert_{\mathrm{op}}\Bigr\} \biggm|
  \begin{array}{l}\eta:\mathcal{M}_Y^*(k-1) \to \mathbb{Q}[\hat W]\\
    \text{ s.t.~}[\ph+\eta \otimes e]=\gamma\end{array}\biggr\}.$$
  It is easy to see that under these norms, the outer two maps of
  \eqref{ES:algebras2} are norm-nonincreasing.  Moreover, for the restriction map
  $$\rho:(\Pi(k,\ph),\lVert\cdot\rVert_l)\to (\Pi(k-1,\ph),\lVert\cdot\rVert_r)$$
  there is a constant $\tau_k$, which is determined by the differentials on $V_k$
  and $\mathbb{Q}[\hat W]$ and is therefore independent of $\ph$, such that
  $$\min\{\lVert\tilde\gamma\rVert_l: \tilde\gamma \in \rho^{-1}(\gamma)\} \leq
  \tau\lVert\gamma\rVert_r \text{ for every }\gamma \in \rho(\Pi(\ph,k)).$$

  Finally, in order to induct we need to compare the left and right norms on
  $\Pi(k-1,\ph)$.  Indeed, there is a polynomial $Q_k$ such that for
  $\gamma \in \Pi(k-1,\ph)$,
  $$\lVert\gamma\rVert_r\leq Q_k(\lVert\ph\rVert)\cdot\lVert\gamma\rVert_l.$$
  This is because for any $u=dv$, $v \in V_k$, $\eta(u)$ decomposes by repeated
  applications of \eqref{Leibniz} as
  $$\eta(u)=\sum_i \ph(u_i)\eta(y_i),$$
  where the $y_i$ are indecomposable.  This bounds $\eta|_{d(V_k)}$ in terms of
  $\eta$ applied to indecomposables.

  Therefore, by the surjectivity four lemma,
  $$P_k(\lVert\ph\rVert) \leq C_{k,-1}+3\tau_kC_{k,0}(Q_k(\lVert\ph\rVert)
  P_{k-1}(\lVert\ph\rVert))^{\rk\iota_1+1}$$
  where $C_{k,-1}$ and $C_{k,0}$ depend only on $X$ and $Y$.  Now we apply the
  injectivity four lemma to the sequence
  $$\pi_1((Y_{k-1})^X,f) \to H^{n_k}(X;A_k) \to \{\text{lifts of }f\} \to 0$$
  and the corresponding sequence of vector spaces, letting the norm on the set of
  lifts be induced by that on $H^{n_k}(X;V_k^*)$.  We get that the rationalization
  on the set of lifts is $P_k'(\lVert\ph\rVert)$-injective where
  $$P_k'(\lVert\ph\rVert)=C_{k,0}(Q_k(\rVert\ph\rVert)
  P_{k-1}(\lVert\ph\rVert)+1)^{\rk\iota_1}$$
  where $C_{k,0}=C_{k,0}(X,Y)$ is the same as above.  Now, distances under this
  norm are a lower bound for distances under the operator norm on
  $\Hom(\mathcal{M}_Y^*,\mathbb{Q}[W])$; this proves the bound \eqref{ballBound}
  and the theorem.
\end{proof}

\section{Rational invariance}

In this section we prove the statements about rational invariance given in
\S\ref{S:ratinv}.  We first restate Theorem \ref{thm:posw}:
\begin{thm*}
  Let $X$ and $Y$ be finite metric complexes with $Y$ simply connected.  If $Y$
  (resp.~$X$) is a \emph{space with positive weights}, then the asymptotic
  behavior of $g_{[X,Y]}$ and $\mathrm{tg}_{[X,Y]}$ depends only on the rational
  homotopy type of $Y$ (resp.~$X$).
\end{thm*}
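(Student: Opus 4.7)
My plan is to prove Theorem \ref{thm:posw} by a telescoping comparison argument, combining the positive-weight structure on $Y$ (respectively $X$) with the uniform finiteness of Proposition \ref{prop:Fto1}. I will treat the case when $Y$ has positive weights; the $X$ case is dual.

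First I would recall what positive weights buys us algebraically: the minimal model $\mathcal{M}_Y$ admits a grading $\mathcal{M}_Y = \bigoplus_{w \geq 1} \mathcal{M}_Y^{(w)}$ by positive integers, preserved by the product and the differential. For each positive integer $N$, the DGA endomorphism $\sigma_N^*$ acting as multiplication by $N^w$ on weight-$w$ indecomposables realizes (after passing to a sufficiently divisible multiple, to clear integral obstructions at each Postnikov stage) as an integer self-map $\sigma_N \colon Y \to Y$, homotopic through Lipschitz maps with $\Lip \sigma_N$ at most a polynomial in $N$ (by Theorem \ref{lipPBB} applied in reverse to the realization problem).

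Second, given a rational homotopy equivalence $f \colon Y \to Z$ between finite nilpotent complexes, I would construct an "approximate inverse" $g \colon Z \to Y$ as follows. The rational isomorphism $\mathcal{M}_Z \to \mathcal{M}_Y$ induced by inverting $f$ has entries in some finitely generated subring of $\mathbb{Q}$, and post-composing with $\sigma_N^*$ clears all denominators for $N$ sufficiently divisible. Inductively lifting over the Postnikov tower (using the obstruction theory of Proposition \ref{extExist} together with $\sigma_N^*$ to absorb the integral obstructions), the resulting DGA map sits in the image of $[Z,Y] \to [Z,Y_{(0)}]$; realizing it gives $g \colon Z \to Y$ with $g \circ f \simeq \sigma_N$ and $\Lip g$ polynomial in $N$. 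In particular, $g$ is itself a rational equivalence, since $\sigma_N$ is.

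Third, with $f$ and $g$ in hand I would invoke Proposition \ref{prop:Fto1} twice: the induced maps $f_* \colon [X,Y] \to [X,Z]$ and $g_* \colon [X,Z] \to [X,Y]$ are uniformly finite-to-one, with constants $F_f$ and $F_g$. Since composition with $f$ or $g$ multiplies Lipschitz constants by bounded factors $\Lambda_f, \Lambda_g$, we immediately obtain
\[ g_{[X,Y]}(L) \leq F_f \cdot g_{[X,Z]}(\Lambda_f L), \qquad g_{[X,Z]}(L) \leq F_g \cdot g_{[X,Y]}(\Lambda_g L), \]
which is exactly the kind of mutual domination that preserves polynomial and polynomial-times-log asymptotics. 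For torsion growth, the same $f_*$/$g_*$ analysis applied fiberwise over $[X,Y_{(0)}] \cong [X,Z_{(0)}]$ shows that corresponding fibers have sizes bounded by constant multiples of each other, so $\mathrm{tg}_{[X,Y]}$ and $\mathrm{tg}_{[X,Z]}$ agree up to constants in the argument. For the contravariant case (positive weights on $X$), one runs the same construction replacing $Y, Z$ by $X, X'$: the self-maps $\sigma_N \colon X \to X$ are used to build integer maps $X \to X'$ pre-composition with which yields the corresponding inequalities.

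The main obstacle is the second paragraph: verifying that composition with $\sigma_N^*$ really does clear all the integral obstructions that cut out the image of $[Z,Y] \to [Z,Y_{(0)}]$ inside the space of rational DGA homomorphisms. This image is defined inductively by realizability of $k$-invariants at each Postnikov stage, and one must see that weight scaling by $N^w$ (with $N$ chosen from a single sufficiently divisible integer depending only on $Y$ and $Z$, not on the individual class) simultaneously lands in the image at every stage. This is essentially where the "telescoping" hypothesis is used in full strength: the weight grading must be compatible with the entire Postnikov tower, so that a single $N$ absorbs denominators globally. Once this is established, the rest is a clean application of Proposition \ref{prop:Fto1} and the Lipschitz bookkeeping already developed in Sections 4--5.
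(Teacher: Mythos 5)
Your overall strategy is the same as the paper's: construct Lipschitz maps in both directions between $Y$ and the rationally equivalent $Z$ that realize rational equivalences, apply Proposition \ref{prop:Fto1} to each to get uniform finite-to-one control, and chain the two resulting inequalities
$g_{[X,Y]}(L) \leq F_f\, g_{[X,Z]}(\Lambda_f L)$ and $g_{[X,Z]}(L) \leq F_g\, g_{[X,Y]}(\Lambda_g L)$
(and their torsion-growth analogues) to conclude that the asymptotics agree up to constants; the contravariant case is handled dually. The one genuine divergence is how you produce the maps going both ways. The paper outsources this entirely to a citation of \cite{BMSS}: positive weights $\Rightarrow$ $0$-universal $\Rightarrow$ there exist maps $Y \xrightarrow{\ph} Z \xrightarrow{\psi} Y$ each a rational equivalence, full stop. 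You instead sketch the underlying construction—use the telescoping self-maps $\sigma_N$ to scale weight-$w$ generators by $N^w$, thereby clearing denominators in the rational inverse and absorbing the integral $k$-invariant obstructions stage by stage in the Postnikov tower. That is the right heuristic, and indeed it is essentially the content of the Mimura--Toda/BMSS result being cited; you also correctly flag its delicate core, namely that a single sufficiently divisible $N$ must clear obstructions at \emph{every} Postnikov stage simultaneously, which the positive-weight grading guarantees because it is compatible with the entire tower. So your approach is self-contained where the paper leans on the literature, at the cost of being a sketch at exactly the step where the literature is doing real work. One small point of overreach: you aim for $g \circ f \simeq \sigma_N$, but nothing in the final comparison actually uses this composition identity—only that $f$ and $g$ are each rational equivalences with controlled Lipschitz constant, which is what the paper assumes.
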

A simply connected space $Y$ has ($\mathbb{Q}$-)\emph{positive weights} (see
\cite{BMSS} or \cite{PWHT}) if the indecomposables of its minimal DGA split as
$U_1 \oplus U_2 \oplus \cdots \oplus U_r$ so that for every $t \in \mathbb{Q}$
there is an automorphism $\ph_t$ sending $v \mapsto t^iv$, $v \in U_i$.  Examples
include formal spaces \cite{Shiga}, coformal spaces \cite{PWHT}, as well as
homogeneous spaces and other spaces whose indecomposables split as
$V_0 \oplus V_1$, where $dV_0=0$ and $dV_1 \subset \bigwedge V_0$.  In
particular, the spaces in the Examples section all have positive weights.  The
lowest-dimensional nonexample, as far as we know, is a complex given in \cite{MT}
which is constructed by attaching a 12-cell to $S^3 \vee \mathbb{C}\mathbf{P}^2$;
other, much higher-dimensional non-examples are given in \cite{ArLu} and
\cite{Am}.
\begin{proof}
  Suppose $Y$ and $Y'$ are rationally equivalent simply connected finite
  complexes with positive weights.  This implies \cite{BMSS} that these spaces
  are \emph{$0$-universal}, in particular, there are maps
  $$Y \xrightarrow{\ph} Y' \xrightarrow{\psi} Y$$
  inducing rational equivalences.  We can assume that these maps are Lipschitz;
  moreover, by Prop.~\ref{prop:Fto1}, there are constants $C(\ph,X)$ and
  $C'(\psi,X)$ such that the maps $[X,Y] \to [X,Y'] \to [X,Y]$ induced by $\ph$
  and $\psi$ are, respectively, $C$-to-one and $C'$-to-one.  Then
  we immediately see that for any $X$,
  \begin{align*}
    g_{[X,Y]}(L) &\leq C g_{[X,Y']}(\Lip(\ph) \cdot L)
    \leq C' C g_{[X,Y]}(\Lip(\psi) \Lip(\ph)\cdot L) \\
    \mathrm{tg}_{[X,Y]}(L) &\leq C \mathrm{tg}_{[X,Y']}(\Lip(\ph) \cdot L)
    \leq C' C \mathrm{tg}_{[X,Y]}(\Lip(\psi) \Lip(\ph)\cdot L). \\
  \end{align*}
  Since all these functions are polynomial, this means that they are within a
  multiplicative constant of each other.

  A similar argument works for rationally equivalent $X$ and $X'$ with positive
  weights.
\end{proof}
It remains to prove Prop.~\ref{prop:Fto1}, which we again restate:
\begin{prop*}
  Given a rational homotopy equivalence $\ph:Y \to Z$ between finite nilpotent
  complexes, for any finite complex $X$, the induced map $[X,Y] \to [X,Z]$ is
  uniformly finite-to-one; i.e., preimages of classes have size bounded by some
  $C(\ph,X)$.
\end{prop*}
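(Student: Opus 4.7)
My plan is to convert $\ph\colon Y\to Z$ into a fibration and exploit the finiteness of the homotopy groups of its fiber. First I would replace $\ph$ by a Serre fibration, still denoted $\ph$, with homotopy fiber $F$. From the long exact sequence in homotopy, each $\pi_n(F)$ sits between $\pi_{n+1}(Z)$ and $\pi_n(Y)$, both of which are finitely generated since $Y$ and $Z$ are finite nilpotent complexes; hence $\pi_n(F)$ is finitely generated. Because $\ph$ is a rational equivalence, $\pi_n(F)\otimes\mathbb{Q}=0$ for every $n$, so every $\pi_n(F)$ is actually a finite group.

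Next, because $Y$ and $Z$ are both nilpotent, $\ph$ is a nilpotent fibration and admits a principal refinement of its relative Postnikov tower: a factorization
$$Y=W_N\to W_{N-1}\to\cdots\to W_0=Z$$
in which each stage $W_k\to W_{k-1}$ is a principal fibration with fiber $K(A_k,n_k)$ and each $A_k$ is a finite abelian subquotient of some $\pi_{n_k}(F)$. Since $\dim X<\infty$, only the finitely many stages with $n_k\le\dim X$ affect the count of based homotopy classes of lifts of a map $X\to Z$: once $n_k>\dim X$, both the obstruction group $H^{n_k+1}(X;A_k)$ and the ambiguity group $H^{n_k}(X;A_k)$ vanish, so lifts through those stages exist and are unique.

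Now, for any $[g]\in[X,Z]$, I would count homotopy classes of lifts to $[X,Y]$ by climbing the tower. Given a class $[f_{k-1}]\in[X,W_{k-1}]$ in the image of the projection from $[X,W_k]$, the obstruction-theoretic exact sequence \eqref{ES:spaces} shows that the fiber over $[f_{k-1}]$ of the map $[X,W_k]\to[X,W_{k-1}]$ is either empty or a quotient of the finite group $H^{n_k}(X;A_k)$, so has cardinality at most $|H^{n_k}(X;A_k)|$, a finite number independent of $g$. Multiplying these bounds over the finitely many stages with $n_k\le\dim X$ yields a constant
$$C(\ph,X)=\prod_{k\,:\,n_k\le\dim X}|H^{n_k}(X;A_k)|$$
bounding $\#\ph_\ast^{-1}[g]$ uniformly in $[g]$, as desired. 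The passage from based to unbased maps contributes at most a factor comparing $\pi_1(Z)$-orbits to $\pi_1(Y)$-orbits, which is again bounded.

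The main obstacle is producing the principal refinement of the relative Postnikov tower with finite fibers. This requires that $\ph$ be a nilpotent fibration whose fiber admits a Postnikov tower filterable by principal stages with finite abelian groups---a standard consequence of the nilpotence of $Y$ and $Z$ in the Hilton--Mislin--Roitberg framework, combined with the finiteness of $\pi_\ast(F)$ established in the first step, but it is the key technical input of the argument.
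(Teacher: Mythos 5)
Your proof is correct and takes essentially the same approach as the paper: factor $\ph$ through its (relative/Moore--)Postnikov tower, whose stages have finite fiber homotopy groups---your $\pi_\ast(F)$ encodes the same data as the paper's relative homotopy groups $\pi_\ast(Z,Y)$---and bound each preimage by the product of the orders of the resulting obstruction/ambiguity cohomology groups $H^\ast(X;-)$. The only cosmetic difference is that you explicitly invoke a principal refinement of the tower (justified by the nilpotence of $Y$, $Z$, and hence $F$), which the paper leaves implicit.
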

For the second part of Theorem \ref{thm:posw}, that concerned with the domain, we
will also need the following dual statement:
\begin{prop*}
  Given a map $\ph:X \to X'$ between finite complexes where the relative homology
  groups $H^*(X',X)$ are finite, for any simply connected finite complex $Y$, the
  induced map $[X',Y] \to [X,Y]$ is uniformly finite-to-one.
\end{prop*}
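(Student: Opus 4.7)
The plan is to run standard relative obstruction theory on the Postnikov tower of $Y$, with the only substantive point to check being that the relevant coefficient groups are finite. First I would replace $\ph$ by a cofibration by passing to the mapping cylinder, so that we may assume $\ph$ embeds $X$ as a subcomplex of $X'$. Under this reduction, the preimage of a class $[g] \in [X, Y]$ under $[X', Y] \to [X, Y]$ is precisely the set of homotopy classes rel~$X$ of extensions of $g$ to $X'$, so the problem becomes one of bounding the number of such extensions uniformly in $g$.

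Next I would pick a Postnikov tower $\cdots \to Y_k \to Y_{k-1} \to \cdots \to Y_0 = *$ for the simply connected finite complex $Y$, with each stage a principal $K(\pi_k Y, k)$-fibration, and count extensions inductively. Suppose an extension $f_{k-1}: X' \to Y_{k-1}$ of the composite $X \xrightarrow{g} Y \to Y_{k-1}$ has already been produced. The relative version of the exact sequence \eqref{ES:spaces} for the pair $(X', X)$ then says that the obstruction to lifting $f_{k-1}$ over $Y_k$ rel~$X$ lives in $H^{k+1}(X', X; \pi_k Y)$, and when it vanishes the set of such lifts rel~$X$ is a torsor over $H^k(X', X; \pi_k Y)$. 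Hence the number of extensions to $Y_k$ refining a fixed extension to $Y_{k-1}$ is at most $|H^k(X', X; \pi_k Y)|$, a quantity depending only on $\ph$ and $Y$.

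The remaining point is that these relative cohomology groups with finitely generated coefficients are finite. The hypothesis that $H^*(X', X; \mathbb{Z})$ is finite implies, via UCT, that $H_*(X', X; \mathbb{Z})$ is also finite in each degree. Since $Y$ is a simply connected finite complex each $\pi_k Y$ is a finitely generated abelian group, so UCT gives
$$H^k(X', X; \pi_k Y) \cong \Hom(H_k(X', X), \pi_k Y) \oplus \Ext(H_{k-1}(X', X), \pi_k Y),$$
and both summands are finite because $\Hom$ and $\Ext$ from a finite abelian group to a finitely generated one are finite. Only stages with $k \leq \dim X'$ contribute nontrivially, so multiplying the per-stage bounds gives a uniform bound
$$C(\ph, Y) = \prod_{k=1}^{\dim X'} \bigl|H^k(X', X; \pi_k Y)\bigr|$$
on the size of every fiber of $[X', Y] \to [X, Y]$.

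The only real obstacle I anticipate is the mild book-keeping between obstructions (sitting in degree $k+1$) and indeterminacy (sitting in degree $k$) at successive stages, but this is handled exactly as in the relative $K(\pi, n)$-fibration machinery already recalled in the paper, and no quantitative estimate is required since we only seek uniform finiteness.
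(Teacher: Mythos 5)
Your proof is correct and takes essentially the same route as the paper, which simply says to ``use the dual argument'' on the Postnikov tower of $Y$ and records precisely the bound $\prod_{k}\lvert H^k(X',X;\pi_k(Y))\rvert$ that you derive. The details you supply (mapping cylinder reduction, the torsor structure on rel-$X$ extensions at each stage, and the UCT argument for finiteness of the coefficient groups) are exactly the filling-in that the paper's one-sentence proof leaves implicit.
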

\begin{proof}[Proof of both propositions.]
  To bound the size of the preimage of a homotopy class, we use obstruction
  theory on the relative Postnikov tower
  \begin{center}
    \begin{tikzpicture}
      \node (x) at (0,0) {$Y$};
      \node (x0) at (3,0) {$P_1$};
      \node (x0prime) at (4.1,0) {$=P_0=Z$};
      \node (x1) at (3,1) {$P_2$};
      \node (vd) at (3,2) {$\vdots$};
      \node (xn) at (3,3) {$P_n$};
      \draw[->] (x) -- (x0) node[near end,anchor=south,inner sep=1pt]
           {$\ph_0=\ph$};
      \draw[->] (x) -- (x1) node[midway,anchor=south,inner sep=2pt] {$\ph_2$};
      \draw[->] (x) -- (xn) node[midway,anchor=south east,inner sep=1pt]
           {$\ph_n$};
      \draw[->] (xn) -- (vd) node[midway,anchor=west] {$p_n$};
      \draw[->] (vd) -- (x1) node[midway,anchor=west] {$p_3$};
      \draw[->] (x1) -- (x0) node[midway,anchor=west] {$p_2$};
    \end{tikzpicture}
  \end{center}
  of the map $\ph:Y \to Z$.  Here, $P_k$ is a space such that $\pi_i(P_k,Y)=0$
  for $i \leq k$ and $\pi_i(Z,P_k)=0$ for $i>k$.  The map $p_k$ therefore only
  has one nonzero (and finite) relative homotopy group, $\pi_k(Z,Y)$.  This means
  that the obstruction to homotoping two lifts of a map $X \to P_k$ to $P_{k+1}$
  lies in $H^k(X;\pi_k(Z,Y))$, which is again finite.  Thus there are at most
  $$\prod_{k=1}^{\dim X} \lvert H^k(X;\pi_k(Z,Y)) \rvert$$
  homotopy classes of maps $X \to Y$ going to any homotopy class of maps
  $Y \to Z$.

  For the dual proposition, we can use the dual argument to show that the size of
  the preimage is bounded by
  $$\prod_{k=1}^{\dim X} \lvert H^k(X',X;\pi_k(Y)) \rvert,$$
  which is also finite.
\end{proof}

\bibliographystyle{amsalpha}
\bibliography{embeddings}
\end{document}